\newcommand\blfootnote[1]{%
\begingroup
\renewcommand\thefootnote{}\footnote{#1}%
\addtocounter{footnote}{-1}%
\endgroup
}
\definecolor{header1}{cmyk}{0,0,0,1}
\numberwithin{equation}{section}
\numberwithin{table}{section}
\numberwithin{equation}{section}
\theoremstyle{theorem}
\newtheorem{theorem}{Theorem}[section]
\newtheorem{proposition}[theorem]{Proposition}
\newtheorem{lemma}[theorem]{Lemma}
\newtheorem{corollary}[theorem]{Corollary}
\theoremstyle{definition}
\newtheorem{remark}[theorem]{Remark}
\newtheorem{example}[theorem]{Example}
\newtheorem{algorithm}[theorem]{Algorithm}
\title{\vspace{-.125in}{\huge\selectfont \textbf{The Waring Problem of Complex Binary Forms}}\vspace{-.075in}}
\author{\normalsize{Hua-Lin Huang$^{1*}$, Haoran Miao$^{2}$, and Yu Ye$^{3}$}\\
\footnotesize{$^{1}$ School of Mathematical Sciences, Huaqiao University, Quanzhou 362021, China} \\
\footnotesize{$^{2}$ School of Mathematical Sciences, Xiamen University, Xiamen 361005, China} \\
\footnotesize{$^{3}$ School of Mathematical Sciences, University of Science and Technology of China, Hefei 230026, China} \\
	\footnotesize{and} \\
	\footnotesize{Hefei National Laboratory, University of Science and Technology of China, Hefei 230088, China}
	}
	\date{}
	\providecommand{\subjclass}[2][2020]{%
\vspace{0.5em}%
\noindent\hspace{\leftmargini}\textbf{MSC 2020:} #2%
}
\providecommand{\keywords}[1]{%
\vspace{0.5em}%
\noindent\hspace{\leftmargini}\textbf{Keywords:} #1
}
\begin{document}
\maketitle



\blfootnote{$^*$ Corresponding author: hualin.huang@hqu.edu.cn}
\vspace{-.2in}
\begin{abstract}
	The Waring problem of forms concerns the expression of homogeneous multivariate polynomials as sums of powers of linear forms. This paper focuses on complex binary forms, and we solve the Waring problem for them using basic tools in algebra and analysis. In particular, we present elementary treatments of the Apolarity Lemma and Sylvester's 1851 Theorem, which are easily accessible and will provide an ideal approach for future extension to the general case. 
\end{abstract}

\subjclass[2020]{11E76; 11P05; 15A06}

\keywords{Binary form; Waring decomposition}

\section{Introduction}\label{sec:intro}
As one of the prestigious problems in mathematics, the classical Waring problem aims to represent all positive integers as sums of the least number of equal powers of integers \cite{Meditationes algebraicæ}. There is an analogy for homogeneous polynomials, or forms for brevity, which asks to express forms as sums of powers of linear forms. It is worth mentioning that, in Hilbert's landmark solution to the classical Waring problem, a Waring-like decomposition of powers of a sum of squares of variables plays a key role, see \cite{Hilbert, Ellison1}. 

In this paper, we focus on the Waring problem of complex binary forms. Given a binary form $f\in\mathbb{C}[x, y]_d$ of degree $d$, the Waring problem of $f$ is to find the least integer $r$ such that
\begin{equation*}
	f=\lambda_1L_1^d+\lambda_2L_2^d+\cdots+\lambda_rL_r^d, 
\end{equation*} 
where $\lambda_1,\lambda_2,\dots,\lambda_r\in\mathbb{C}^*$, $L_1,L_2,\dots,L_r\in\mathbb{C}[x, y]_1$ are linear forms. The above expression is referred to as a minimal decomposition of $f$, and $r$ is called the Waring rank of $f$, denoted by $\operatorname{WR}\,(f)=r$. 


The Waring problem of complex binary forms dates back to the investigations of canonical forms of polynomials by Sylvester \cite{Sylvester, S2}. He introduced the apolarity method and proved that a generic binary form of degree $2r-1$ has Waring rank $r$ and has the unique minimal decomposition. Gundelfinger extended the Hessian to higher orders and applied it to solve the Waring problem of complex binary forms explicitly \cite{Gundelfinger}. For nearly two centuries, the results and methods of Sylvester and Gundelfinger have been revisited and reformulated in increasingly modern language and with modern machinery, see e.g. \cite{Iarrobino,Kung Rota,Kung,even,Reznick}. Sylvester's work already contains the germ of an algorithm for the Waring decomposition of complex binary forms, which has since been refined and generalized in subsequent works \cite{BCMT,Comas,Comon}. Moreover, the Waring problem of multivariate forms has been systematically studied, see \cite{Iarrobino,AH}. 


This paper treats the Waring problem of complex binary forms via basic tools of linear algebra and calculus, keeping the exposition accessible to anyone with minimal mathematical preparation. By exploiting the familiar Vandermonde matrices to relate monomials in $\mathbb{C}[x, y]_d$ to the $d$-th powers of binary linear forms, we establish the existence of Waring decompositions and then apply polynomial differentiation to obtain an elementary proof of the Apolarity Lemma for binary forms. Furthermore, we provide a proof of the celebrated Sylvester's 1851 Theorem using linear equations and the theory of matrix eigenvalues. This treatment for the Waring problem of binary forms is not only elementary and transparent but also yields an explicit linear-algebraic interpretation of the summands: each term of the Waring decomposition corresponds to an eigenvalue of the Hankel matrix determined by the coefficients of the binary form. In addition, while treating the algorithm for Waring decomposition and Gundelfinger's Theorem for binary forms of even degree, we relate forms of different degrees via differentiation and integration. Our approach is natural, but seems novel. In contrast to Reznick's \cite{Reznick} ingenious elementary proof via generating functions, our approach relies solely on standard knowledge of algebra and analysis. 


The remainder of this paper is organized as follows. In Section \ref{Sec 2} we present an elementary proof of the Apolarity Lemma of binary forms and thereby establish the upper bound for their Waring ranks. In Section \ref{Sec 3} we use the coefficients of binary forms to present sufficient and necessary conditions for the Waring decomposition of binary forms, determine the Waring rank of a generic binary form, and provide an effective algorithm for computing both the Waring rank and a minimal decomposition of any given binary form.


Throughout this paper, we work over the polynomial ring $\mathcal{R}=\mathbb{C}[x,y]$ and denote by $\mathcal{R}_d$ the vector space of complex binary forms of degree $d$. By $\mathcal{D}=\mathbb{C}[\partial_x, \partial_y]$ we denote the ring of differential polynomials with complex coefficients. Any $\mathfrak{g} \in \mathcal{D}$ acts naturally on $f \in \mathcal{R}$ as a differential operator, written $\mathfrak{g} \circ f$. We set $\mathbb{C}^*=\mathbb{C}\setminus\{0\}$. For convenience, if $i \in \{1, 2, \dots, r \}$, we write $\check{i}=\{1,2,\dots,r\}\setminus\{i\}$. 


\section{The Apolarity Lemma}\label{Sec 2} 

The Apolarity Lemma is crucial for studying the Waring problem of forms. Using the Vandermonde matrix, we construct a basis of $\mathcal{R}_d$ consisting of the $d$-th powers of linear forms, establish the existence of Waring decompositions of binary forms, and leverage this result to prove the Apolarity Lemma.


\begin{lemma}\label{lemma 2.1}
	Let $\beta_1,\beta_2,\dots,\beta_r\in\mathbb{C}$ be pairwise distinct with $1\leqslant r\leqslant d+1$. Then the $r$ binary forms $(x+\beta_1y)^d,(x+\beta_2y)^d,\dots,(x+\beta_ry)^d$ are linearly independent. In particular, if $r=d+1$, then they constitute a basis of $\mathcal{R}_d$. 
\end{lemma}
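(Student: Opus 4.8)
The plan is to expand each binary form $(x+\beta_i y)^d$ in the monomial basis of $\mathcal{R}_d$ and read off the coefficient matrix. By the binomial theorem,
\[
(x+\beta_i y)^d=\sum_{k=0}^{d}\binom{d}{k}\beta_i^{\,k}\,x^{d-k}y^{k},
\]
so relative to the ordered basis $x^d, x^{d-1}y,\dots,y^d$ of $\mathcal{R}_d$ the coordinate vector of $(x+\beta_i y)^d$ is $\bigl(\binom{d}{0},\binom{d}{1}\beta_i,\dots,\binom{d}{d}\beta_i^d\bigr)^{\mathsf{T}}$. Assembling these $r$ vectors as the columns of a $(d+1)\times r$ matrix $M$, I observe that $M=B\,V$, where $B=\operatorname{diag}\!\bigl(\binom{d}{0},\binom{d}{1},\dots,\binom{d}{d}\bigr)$ is an invertible diagonal matrix and $V$ is the $(d+1)\times r$ Vandermonde-type matrix with $(k,i)$ entry $\beta_i^{\,k}$ for $k=0,\dots,d$.

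The key step is then the rank of $V$. Since $1\leqslant r\leqslant d+1$, the first $r$ rows of $V$ form the $r\times r$ Vandermonde matrix in the pairwise distinct nodes $\beta_1,\dots,\beta_r$, whose determinant $\prod_{1\le i<j\le r}(\beta_j-\beta_i)$ is nonzero; hence $\operatorname{rank} V=r$, and multiplying on the left by the invertible $B$ preserves this, so $\operatorname{rank} M=r$. Therefore the $r$ columns of $M$ are linearly independent, which is exactly the statement that $(x+\beta_1 y)^d,\dots,(x+\beta_r y)^d$ are linearly independent in $\mathcal{R}_d$. When $r=d+1$ this is a linearly independent set of $d+1$ vectors in the $(d+1)$-dimensional space $\mathcal{R}_d$, hence a basis.

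I do not anticipate a genuine obstacle here: the only point requiring care is keeping the bookkeeping straight between the abstract linear (in)dependence of forms and the concrete linear (in)dependence of their coordinate vectors, and making sure the range $1\le r\le d+1$ is used precisely where the Vandermonde block is square (or a square submatrix exists). An alternative, differentiation-flavored argument — suppose $\sum_i c_i(x+\beta_i y)^d=0$ and apply suitable operators from $\mathcal{D}$, or specialize $x=-\beta_j$, $y=1$ — would also work, but the Vandermonde computation is the cleanest and matches the paper's stated strategy of "using the Vandermonde matrix."
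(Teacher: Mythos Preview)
Your proof is correct and follows essentially the same approach as the paper: both expand $(x+\beta_i y)^d$ in a monomial basis and reduce linear independence to the full column rank of a $(d+1)\times r$ Vandermonde matrix in the distinct nodes $\beta_1,\dots,\beta_r$. The only cosmetic difference is that the paper absorbs the binomial coefficients into the chosen basis $x^d,\binom{d}{1}x^{d-1}y,\dots,y^d$, whereas you factor them into a separate invertible diagonal matrix $B$; the underlying argument is identical.
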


\begin{proof}
	Note that the monomials $x^{d},\binom{d}{1}x^{d-1}y,\dots,\binom{d}{d-1}xy^{d-1},y^{d}$ form a basis of $\mathcal{R}_{d}$, and that
	\begin{equation*}
		\bigl((x+\beta_{1}y)^{d},(x+\beta_{2}y)^{d},\dots,(x+\beta_{r}y)^{d}\bigr)=\bigl(x^{d},\tbinom{d}{1}x^{d-1}y,\dots,y^{d}\bigr)
		\begin{pmatrix}
			1 & 1 & \cdots & 1\\
			\beta_{1} & \beta_{2} & \cdots & \beta_{r}\\
			\vdots & \vdots & \ddots & \vdots\\
			\beta_{1}^{d} & \beta_{2}^{d} & \cdots & \beta_{r}^{d}
		\end{pmatrix}.
	\end{equation*}
	By assumption $r\leqslant d+1$, so the above Vandermonde matrix has full column rank. Therefore, the $r$ binary forms $(x+\beta_1y)^d,(x+\beta_2y)^d,\dots,(x+\beta_ry)^d$ are linearly independent. The remaining follows trivially. 
\end{proof}

Lemma \ref{lemma 2.1} immediately yields the following result.

\begin{corollary}[Existence of Waring decompositions of binary forms]\label{Cor 2.2}
	For every $f\in \mathcal{R}_d$ and any choice of pairwise distinct $\beta_1,\beta_2,\dots,\beta_{d+1}\in\mathbb{C}$, there exist unique $\lambda_1,\lambda_2,\dots,\lambda_{d+1}\in\mathbb{C}$ such that
	\begin{equation*}
		f(x,y)=\sum_{k=1}^{d+1}\lambda_k(x+\beta_ky)^d. 
	\end{equation*}
\end{corollary}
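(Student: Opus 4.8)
The plan is to derive Corollary \ref{Cor 2.2} directly from Lemma \ref{lemma 2.1} by a dimension count. First I would observe that when $r = d+1$, Lemma \ref{lemma 2.1} says that the forms $(x+\beta_1 y)^d, (x+\beta_2 y)^d, \dots, (x+\beta_{d+1} y)^d$ are linearly independent in $\mathcal{R}_d$. Since $\mathcal{R}_d$ has dimension $d+1$ (its standard monomial basis $x^d, x^{d-1}y, \dots, y^d$ has $d+1$ elements), these $d+1$ linearly independent forms constitute a basis of $\mathcal{R}_d$ — which is exactly the ``in particular'' clause already stated in the lemma.

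Next I would invoke the defining property of a basis: every $f \in \mathcal{R}_d$ can be written uniquely as a linear combination of basis elements. Applying this to the basis $\{(x+\beta_k y)^d\}_{k=1}^{d+1}$ yields unique scalars $\lambda_1, \dots, \lambda_{d+1} \in \mathbb{C}$ with $f = \sum_{k=1}^{d+1} \lambda_k (x+\beta_k y)^d$. Concretely, in the notation of the proof of Lemma \ref{lemma 2.1}, if $f = (x^d, \tbinom{d}{1}x^{d-1}y, \dots, y^d)\, c$ for the coefficient column vector $c \in \mathbb{C}^{d+1}$, then $\lambda = V^{-1} c$ where $V$ is the (now square and invertible, since the $\beta_k$ are distinct) Vandermonde matrix; invertibility of $V$ gives both existence and uniqueness of $\lambda$ simultaneously.

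This argument is essentially immediate, so there is no real obstacle; the only point requiring a word of care is making explicit that $\dim \mathcal{R}_d = d+1$, so that ``$d+1$ linearly independent vectors'' upgrades to ``basis.'' One could alternatively phrase the whole thing in one line: the Vandermonde matrix with nodes $\beta_1, \dots, \beta_{d+1}$ is square and has nonzero determinant $\prod_{i<j}(\beta_j - \beta_i) \neq 0$, hence is invertible, and the equation $f = \sum_k \lambda_k (x+\beta_k y)^d$ is equivalent to the linear system $V\lambda = c$, which has the unique solution $\lambda = V^{-1} c$.
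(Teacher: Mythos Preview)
Your proposal is correct and takes essentially the same approach as the paper: the paper states only that the corollary follows immediately from Lemma~\ref{lemma 2.1}, and your argument simply spells out that immediate deduction (basis of a $(d+1)$-dimensional space, or equivalently invertibility of the square Vandermonde matrix). There is nothing to add or correct.
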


\begin{remark}\label{Rmk 2.3}
	In fact, for any distinct $\beta_1,\beta_2,\dots,\beta_r\in\mathbb{C}$ with $1\leqslant r\leqslant d$, the $r+1$ binary forms $(x+\beta_1y)^d,(x+\beta_2y)^d,\dots,(x+\beta_ry)^d,y^d$ are linearly independent. In particular, setting $r=d$ yields a basis of $\mathcal{R}_d$. This approach, usually expressed in homogeneous coordinates, was previously employed by Reznick \cite{Reznick} and Ellison \cite{Ellison}, among others.
\end{remark}

Now we present an elementary proof of the Apolarity Lemma using Lemma \ref{lemma 2.1} and Corollary \ref{Cor 2.2}. 


\begin{theorem}[Apolarity Lemma]\label{Th 2.4}
	Let $f\in \mathcal{R}_d$ and let $\beta_1,\beta_2,\dots,\beta_r\in\mathbb{C}$ be pairwise distinct, where $1\leqslant r\leqslant d+1$. Then
	\begin{enumerate}
		\item[(1)] There exist $\lambda_1,\lambda_2,\dots,\lambda_r\in\mathbb{C}$ such that
		\begin{equation}\label{2.1}
			f(x,y)=\sum_{k=1}^{r}\lambda_k(x+\beta_ky)^d
		\end{equation}
		if and only if
		\begin{equation}\label{2.2}
			\prod_{\ell=1}^{r}(\partial_y-\beta_\ell\partial_x)\circ f=0.
		\end{equation}
		
		\item[(2)] For any fixed $1 \leqslant i \leqslant r$, the coefficient $\lambda_i$ in \eqref{2.1} is nonzero if and only if 
		\begin{equation*}
			\prod_{\ell\in\check{i}}(\partial_y-\beta_\ell\partial_x)\circ f\ne 0. 
		\end{equation*}
	\end{enumerate}
\end{theorem}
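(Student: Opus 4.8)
The plan is to use the operators $D_\ell := \partial_y - \beta_\ell \partial_x$ as the central tool and to exploit the fact that they act diagonally on the forms $(x+\beta_k y)^d$. A direct computation gives $D_\ell \circ (x+\beta_k y)^d = d(\beta_k - \beta_\ell)(x+\beta_k y)^{d-1}$, so $D_\ell$ annihilates $(x+\beta_\ell y)^d$ and scales every other $(x+\beta_k y)^d$ by a nonzero factor (while lowering the degree by one). More generally, for a subset $S \subseteq \{1,\dots,r\}$, the composite operator $\prod_{\ell \in S} D_\ell$ sends $(x+\beta_k y)^d$ to $c_k^S (x+\beta_k y)^{d-|S|}$ where $c_k^S = \frac{d!}{(d-|S|)!}\prod_{\ell\in S}(\beta_k-\beta_\ell)$, which is zero precisely when $k \in S$ and nonzero otherwise. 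I would record this computation as the opening step, since both parts of the theorem reduce to it.

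For part (1), the backward direction ($\Leftarrow$) is the substantive one. Assume $\prod_{\ell=1}^r D_\ell \circ f = 0$. Extend $\beta_1,\dots,\beta_r$ to a list of $d+1$ pairwise distinct complex numbers $\beta_1,\dots,\beta_{d+1}$, and apply Corollary \ref{Cor 2.2} to write $f = \sum_{k=1}^{d+1} \lambda_k (x+\beta_k y)^d$ uniquely. Applying $\prod_{\ell=1}^r D_\ell$ to this expression and using the opening computation, the first $r$ terms die and we are left with $\sum_{k=r+1}^{d+1} \lambda_k c_k (x+\beta_k y)^{d-r} = 0$ where each $c_k = \frac{d!}{(d-r)!}\prod_{\ell=1}^r(\beta_k - \beta_\ell) \neq 0$. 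Since $d-r \geqslant 0$ and $\beta_{r+1},\dots,\beta_{d+1}$ are $d+1-r \leqslant (d-r)+1$ pairwise distinct numbers, Lemma \ref{lemma 2.1} says $(x+\beta_{r+1}y)^{d-r},\dots,(x+\beta_{d+1}y)^{d-r}$ are linearly independent, forcing $\lambda_k c_k = 0$, hence $\lambda_k = 0$ for all $k > r$. That gives \eqref{2.1}. The forward direction ($\Rightarrow$) is immediate: if $f = \sum_{k=1}^r \lambda_k(x+\beta_k y)^d$ then applying $\prod_{\ell=1}^r D_\ell$ kills each summand term-by-term.

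For part (2), fix $i$ and suppose first that $\lambda_i \neq 0$ in a decomposition \eqref{2.1} (which exists by part (1)). Apply the operator $\prod_{\ell \in \check i} D_\ell$ to $f = \sum_{k=1}^r \lambda_k (x+\beta_k y)^d$: every summand with $k \in \check i$ is annihilated (since $k$ itself lies in the index set $\check i$), leaving only the $k=i$ term, namely $\lambda_i c_i^{\check i} (x+\beta_i y)^{d-(r-1)}$ with $c_i^{\check i} = \frac{d!}{(d-r+1)!}\prod_{\ell \in \check i}(\beta_i - \beta_\ell) \neq 0$; since $\lambda_i \neq 0$ and a nonzero power of a linear form is a nonzero form, this is nonzero. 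Conversely, suppose $\lambda_i = 0$ in \emph{some} decomposition \eqref{2.1}; then $f = \sum_{k \in \check i} \lambda_k (x+\beta_k y)^d$, and applying $\prod_{\ell \in \check i} D_\ell$ annihilates every remaining summand, so $\prod_{\ell \in \check i} D_\ell \circ f = 0$. The one subtlety worth a sentence is that the decomposition \eqref{2.1} need not be unique when $r < d+1$, so "the coefficient $\lambda_i$" should be read as referring to a chosen decomposition; the argument above shows the two conditions "$\lambda_i \neq 0$ for the chosen decomposition" and "$\prod_{\ell\in\check i} D_\ell \circ f \neq 0$" are equivalent, and in particular the nonvanishing of $\lambda_i$ does not depend on the choice.

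The only real obstacle is bookkeeping: making sure the degree $d - |S|$ stays nonnegative so that Lemma \ref{lemma 2.1} applies, and correctly tracking which constants are nonzero. There is no deep difficulty — the whole argument is driven by the single eigen-relation $D_\ell \circ (x+\beta_k y)^d = d(\beta_k-\beta_\ell)(x+\beta_k y)^{d-1}$ together with the Vandermonde-based independence of Lemma \ref{lemma 2.1}.
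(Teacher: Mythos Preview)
Your proof is correct and follows essentially the same route as the paper's: the key eigen-relation $(\partial_y - \beta_\ell \partial_x)\circ (x+\beta_k y)^d = d(\beta_k - \beta_\ell)(x+\beta_k y)^{d-1}$, extension to $d+1$ distinct $\beta$'s via Corollary~\ref{Cor 2.2}, and Lemma~\ref{lemma 2.1} to force the extra coefficients to vanish, with part~(2) reduced to the same single-term survival computation. One small correction to your closing remark: the decomposition \eqref{2.1} \emph{is} unique whenever it exists, since Lemma~\ref{lemma 2.1} already guarantees that $(x+\beta_1 y)^d,\dots,(x+\beta_r y)^d$ are linearly independent for $r\leqslant d+1$, so your worry about non-uniqueness of $\lambda_i$ is unnecessary.
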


\begin{proof}
	(1) For any $1\leqslant k,\ell\leqslant r$, we have
	\begin{equation}\label{2.3}
		(\partial_y-\beta_\ell\partial_x)\circ(x+\beta_ky)^d=d(\beta_k-\beta_\ell)(x+\beta_ky)^{d-1}. 
	\end{equation}
	Assume that \eqref{2.1} holds. Then \eqref{2.3} implies
	\begin{align*}
		\prod_{\ell=1}^{r}(\partial_y-\beta_\ell\partial_x)\circ f&=\sum_{k=1}^{r}\lambda_k\biggl(\prod_{\ell=1}^{r}(\partial_y-\beta_\ell\partial_x)\circ(x+\beta_ky)^d\biggr)\\
		&=\sum_{k=1}^{r}\lambda_k\biggl(\prod_{\ell\in\check{k}}(\partial_y-\beta_\ell\partial_x)(\partial_y-\beta_k\partial_x)\circ(x+\beta_ky)^d\biggr)=0. 
	\end{align*}
	Conversely, suppose that \eqref{2.2} holds. Choose $\beta_{r+1},\dots,\beta_{d+1}\in\mathbb{C}$ so that $\beta_1,\dots,\beta_{d+1}$ are pairwise distinct. By Corollary \ref{Cor 2.2}, there exist $\lambda_1,\lambda_2,\dots,\lambda_{d+1}\in\mathbb{C}$ such that 
	\begin{equation}\label{d+1}
		f(x,y)=\sum_{k=1}^{d+1}\lambda_k(x+\beta_ky)^d. 
	\end{equation}
	Applying the operator $\prod_{\ell=1}^{r}(\partial_y-\beta_\ell\partial_x)$ to \eqref{d+1} yields
	\begin{equation*}
		\prod_{\ell=1}^{r}(\partial_y-\beta_\ell\partial_x)\circ f=\sum_{k=r+1}^{d+1}\lambda_k\frac{d!}{(d-r)!}\prod_{\ell=1}^{r}(\beta_k-\beta_\ell)(x+\beta_ky)^{d-r}=0, 
	\end{equation*}
	where the first $r$ terms vanish by \eqref{2.3}. According to Lemma \ref{lemma 2.1}, the binary forms $(x+\beta_{r+1}y)^{d-r},\dots,(x+\beta_{d+1}y)^{d-r}$ are linearly independent. Hence $\lambda_{r+1}=\cdots=\lambda_{d+1}=0$, and \eqref{2.1} follows. 
	
	(2) Assume that \eqref{2.1} holds. Notice that
	\begin{align*}
		\prod_{\ell\in\check{i}}(\partial_y-\beta_\ell\partial_x)\circ f&=\prod_{\ell\in\check{i}}(\partial_y-\beta_\ell\partial_x)\circ \biggl(\sum_{k\in\check{i}}\lambda_k(x+\beta_ky)^d+\lambda_i(x+\beta_iy)^d\biggr)\\
		&=\prod_{\ell\in\check{i}}(\partial_y-\beta_\ell\partial_x)\circ \lambda_i(x+\beta_iy)^d\\
		&=\lambda_i\frac{d!}{(d-r+1)!}\prod_{\ell\in\check{i}}(\beta_i-\beta_\ell)(x+\beta_iy)^{d-r+1}. 
	\end{align*}
	Thus, the conclusion follows immediately. 
\end{proof}

\begin{remark}
	Previous proofs of the Apolarity Lemma rely on algebraic geometry or invariant theory \cite{Iarrobino, Kung Rota}, which can be rather abstract and challenging for beginners to grasp. We complete the proof using only elementary linear algebra and calculus, and explicitly identify the conditions under which each term in a Waring decomposition actually appears. This may provide an ideal approach for further study of the general case.
	
\end{remark}

Furthermore, we derive the following result.

\begin{corollary}\label{Cor 2.6}
	The Waring rank of a binary form $f\in\mathcal{R}_d$ is the minimal degree of all homogeneous polynomials in $f^{\bot}=\{\mathfrak{g}\in\mathbb{C}[\partial_x,\partial_y]\mid \mathfrak{g}\circ f=0\}$ which have no repeated roots. 
\end{corollary}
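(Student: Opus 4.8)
The plan is to show that $\operatorname{WR}(f)$ equals $m := \min\{\deg \mathfrak{g} : \mathfrak{g}\in f^{\bot},\ \mathfrak{g} \text{ squarefree}\}$ by proving the two inequalities separately, using Theorem~\ref{Th 2.4} as the bridge between Waring decompositions and differential operators annihilating $f$. The key observation is that a squarefree homogeneous $\mathfrak{g}\in\mathcal{D}$ of degree $r$ factors (up to scalar) as $\prod_{\ell=1}^{r}(\partial_y - \beta_\ell \partial_x)$ for pairwise distinct $\beta_\ell\in\mathbb{C}$, \emph{provided} that $\partial_x$ does not divide $\mathfrak{g}$; one must handle the possibility that $\partial_x \mid \mathfrak{g}$, i.e.\ a root "at infinity," which corresponds to allowing $y^d$ itself as a summand. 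I will first reduce to the case where $\partial_x \nmid \mathfrak{g}$: if $f$ actually involves the monomial $y^d$ (equivalently $\partial_x^d \circ f \ne 0$ is false in the relevant sense), one can arrange, after a generic linear change of coordinates on $(x,y)$ — which changes neither the Waring rank nor the minimal squarefree degree in $f^{\bot}$ — that no minimal-degree squarefree annihilator is divisible by $\partial_x$. I expect this coordinate-change bookkeeping to be the main nuisance, though it is routine.

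For the inequality $\operatorname{WR}(f) \leqslant m$: pick a squarefree homogeneous $\mathfrak{g}\in f^{\bot}$ of degree $m$. After the reduction above, write $\mathfrak{g} = c\prod_{\ell=1}^{m}(\partial_y - \beta_\ell\partial_x)$ with $c\in\mathbb{C}^*$ and the $\beta_\ell$ pairwise distinct; then $\prod_{\ell=1}^{m}(\partial_y - \beta_\ell\partial_x)\circ f = 0$, so by Theorem~\ref{Th 2.4}(1) there exist $\lambda_1,\dots,\lambda_m\in\mathbb{C}$ with $f = \sum_{k=1}^{m}\lambda_k(x+\beta_k y)^d$. Discarding any summands with $\lambda_k = 0$ exhibits $f$ as a sum of at most $m$ powers of linear forms, so $\operatorname{WR}(f)\leqslant m$. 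Note $m \leqslant d+1$ is automatic since $\mathcal{R}_d$ has dimension $d+1$ and $f^{\bot}$ contains operators of degree $d+1$ (indeed all of $\mathcal{D}_{d+1}$ annihilates nothing forced — rather, one uses that some squarefree operator of degree $\leqslant d+1$ kills $f$, which follows from Corollary~\ref{Cor 2.2}), so Theorem~\ref{Th 2.4} applies.

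For the reverse inequality $\operatorname{WR}(f) \geqslant m$: let $r = \operatorname{WR}(f)$ and take a minimal decomposition $f = \sum_{k=1}^{r}\lambda_k L_k^d$ with $\lambda_k\in\mathbb{C}^*$ and $L_k$ pairwise non-proportional linear forms. After a linear change of coordinates we may write $L_k = x + \beta_k y$ with the $\beta_k$ pairwise distinct (again using that such a coordinate change preserves both quantities). By the computation in the proof of Theorem~\ref{Th 2.4}(1), the operator $\mathfrak{g} = \prod_{\ell=1}^{r}(\partial_y - \beta_\ell\partial_x)$ satisfies $\mathfrak{g}\circ f = 0$, and $\mathfrak{g}$ is squarefree of degree $r$ (its factors are distinct because the $\beta_\ell$ are). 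Hence $f^{\bot}$ contains a squarefree homogeneous polynomial of degree $r$, so $m \leqslant r = \operatorname{WR}(f)$. Combining the two inequalities gives $\operatorname{WR}(f) = m$, as claimed; one should also remark that $f^{\bot}$ does contain squarefree elements (so the minimum is over a nonempty set), which is exactly what the $\operatorname{WR}(f)\geqslant m$ direction produces.
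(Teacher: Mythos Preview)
Your argument is correct and is precisely the natural elaboration of what the paper leaves implicit: the paper states Corollary~\ref{Cor 2.6} without proof, treating it as an immediate consequence of Theorem~\ref{Th 2.4}, and your two-inequality argument via that theorem is exactly what one would write if asked to fill in the details. The one point the paper glosses over and you correctly flag is the possibility that a squarefree $\mathfrak{g}\in f^{\bot}$ has $\partial_x$ as a factor (equivalently, that a minimal Waring decomposition involves the summand $y^d$); your use of a generic linear coordinate change---which preserves both $\operatorname{WR}(f)$ and the minimal squarefree degree in $f^{\bot}$---is the clean way to reduce to the setting of Theorem~\ref{Th 2.4}, and it works.
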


We now review the well-known result that the Waring rank of a complex binary form of degree $d$ is at most $d$, which is a typical application of the Apolarity Lemma.

\begin{corollary}\label{Cor 2.7}
	The Waring rank of any $f\in \mathcal{R}_d$ is at most $d$. 
\end{corollary}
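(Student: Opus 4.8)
The plan is to derive the bound from the Apolarity Lemma: it suffices to produce pairwise distinct $\beta_{1},\dots,\beta_{d}\in\mathbb{C}$ with $\prod_{\ell=1}^{d}(\partial_{y}-\beta_{\ell}\partial_{x})\circ f=0$, for then Theorem~\ref{Th 2.4}(1), applied with $r=d$, gives $f=\sum_{k=1}^{d}\lambda_{k}(x+\beta_{k}y)^{d}$, and discarding the vanishing coefficients exhibits $\operatorname{WR}(f)\le d$. The case $d=1$ is trivial (every $f\in\mathcal{R}_{1}$ is a linear form), so assume $d\ge 2$; and if $f\in\mathbb{C}y^{d}$ then $\operatorname{WR}(f)\le 1\le d$, so assume also $f\notin\mathbb{C}y^{d}$.

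Expanding $\prod_{\ell=1}^{d}(\partial_{y}-\beta_{\ell}\partial_{x})=\sum_{j=0}^{d}(-1)^{j}e_{j}(\beta)\,\partial_{x}^{j}\partial_{y}^{d-j}$ in terms of the elementary symmetric polynomials $e_{j}$ of $\beta=(\beta_{1},\dots,\beta_{d})$ and letting it act on $f$, one obtains
\begin{equation*}
\prod_{\ell=1}^{d}(\partial_{y}-\beta_{\ell}\partial_{x})\circ f=\Phi(\beta),\qquad \Phi(\beta):=\sum_{j=0}^{d}(-1)^{j}\bigl(\partial_{x}^{j}\partial_{y}^{d-j}\circ f\bigr)\,e_{j}(\beta).
\end{equation*}
The coefficient of $e_{j}$ here is a nonzero scalar times the coefficient of $x^{j}y^{d-j}$ in $f$, so $\Phi$ is \emph{nonconstant} (some coefficient with $j\ge 1$ is nonzero, since $f\notin\mathbb{C}y^{d}$); moreover $\Phi$ is \emph{symmetric} and \emph{multilinear} in $\beta_{1},\dots,\beta_{d}$, as each $e_{j}$ is. Thus the whole statement reduces to the claim that \emph{a nonconstant symmetric multilinear polynomial $\Phi$ in $d\ge 2$ variables has a zero with pairwise distinct coordinates.}

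To prove the claim I would argue by contradiction. If $\Phi$ had no such zero, then $\{\Phi=0\}\subseteq\bigcup_{i<j}\{\beta_{i}=\beta_{j}\}=\{V=0\}$, where $V=\prod_{i<j}(\beta_{i}-\beta_{j})$. By the Nullstellensatz $\Phi$ divides a power of $V$, so in the unique factorization domain $\mathbb{C}[\beta_{1},\dots,\beta_{d}]$ every irreducible factor of the nonconstant $\Phi$ is a scalar multiple of some $\beta_{i}-\beta_{j}$; the symmetry of $\Phi$ then forces all the $\beta_{i}-\beta_{j}$ to occur among its factors, so $V\mid\Phi$. Since $\Phi$ is symmetric and $V$ is antisymmetric, $\Phi/V$ is antisymmetric, hence vanishes on every $\{\beta_{i}=\beta_{j}\}$ and is again divisible by $V$; therefore $V^{2}\mid\Phi$. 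But the lexicographic leading monomial of $V^{2}$ equals $\beta_{1}^{2(d-1)}\beta_{2}^{2(d-2)}\cdots$, which is divisible by $\beta_{1}^{2}$ for $d\ge 2$, so the leading monomial of $\Phi$ --- being a nonzero multiple of $V^{2}$ --- is not squarefree, contradicting the multilinearity of $\Phi$. This proves the claim, hence the corollary.

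The crux, and the only genuine obstacle, is this last claim: that the apolarity locus $\{\Phi=0\}$ is not swallowed by the ``repeated-coordinate'' locus --- equivalently, that the degree-$d$ part of $f^{\perp}$ is not contained in the discriminant hypersurface of binary forms of degree $d$. The argument runs smoothly because $\Phi$, being a $\mathbb{C}$-linear combination of the $e_{j}$, is automatically multilinear, while any nonzero multiple of $V^{2}$ visibly is not; beyond that, only the harmless reductions $d\ge 2$ and $f\notin\mathbb{C}y^{d}$ need checking.
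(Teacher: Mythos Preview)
Your proof is correct, but it follows a different route from the paper's. The paper first performs an affine change of variables to arrange that the coefficients of $x^{d}$ and $y^{d}$ in $f$ coincide (either both zero or both $1$), and then simply observes that the single explicit operator $\partial_{y}^{d}-\partial_{x}^{d}=\prod_{k=1}^{d}(\partial_{y}-\zeta^{k}\partial_{x})$, $\zeta=e^{2\pi i/d}$, annihilates such an $f$; its roots are the $d$-th roots of unity, automatically distinct. Your argument instead treats the apolarity condition as the vanishing of a symmetric multilinear polynomial $\Phi(\beta)$ and invokes the Nullstellensatz together with a $V^{2}\mid\Phi$ divisibility trick to show that $\Phi$ must vanish somewhere off the diagonals. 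The paper's method is fully constructive and stays within linear algebra and calculus, in keeping with its declared goal of elementary accessibility, at the price of a short case analysis on $a_{0},a_{d}$. Your method avoids any change of variables and proves a clean standalone lemma (``a nonconstant $\mathbb{C}$-linear combination of elementary symmetric polynomials has a zero with pairwise distinct coordinates''), but it imports the Nullstellensatz, which is heavier machinery than anything else used in this section; if you want to stay in the paper's elementary register, note that your $\Phi=0$ is equivalent to a single affine-linear condition on the coefficients of the monic polynomial $\prod_{\ell}(t-\beta_{\ell})$, and one can argue directly that a hyperplane in the space of monic degree-$d$ polynomials cannot lie inside the discriminant locus.
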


\begin{proof}
	Suppose $f(x,y)=a_0x^d+\binom{d}{1}a_1x^{d-1}y+\cdots+\binom{d}{d-1}a_{d-1}xy^{d-1}+a_dy^d$. 
	
	If $a_0a_d\ne 0$, perform the following linear changes of variables:
	\begin{equation*}
		\begin{cases}
			\tilde{x}=\sqrt[d]{a_0}x,\\
			\tilde{y}=\sqrt[d]{a_d}y.
		\end{cases}
	\end{equation*}
	Then the coefficients of  $\tilde{x}^d$ and $\tilde{y}^d$ are both $1$. 
	
	If exactly one of $a_0$ and $a_d$ is nonzero, assume without loss of generality that $a_0\ne 0$ and $a_d=0$. Then the polynomial $a_0x^d+\binom{d}{1}a_1x^{d-1}+\cdots+\binom{d}{d-1}a_{d-1}x$ is not identically zero and thus has at most $d$ roots. Choose $\alpha$ such that $a_0\alpha^d+\binom{d}{1}a_1\alpha^{d-1}+\cdots+\binom{d}{d-1}a_{d-1}\alpha \ne 0$, and perform the following linear changes of variables:
	\begin{equation*}
		\begin{cases}
			\tilde{x}=x-\alpha y,\\
			\tilde{y}=y.
		\end{cases}
	\end{equation*}
	Then the coefficient of $\tilde{y}^d$ is nonzero, while the coefficient of $\tilde{x}^d$ remains $a_0$. This reduces the problem to the case where $a_0a_d\ne 0$. 
	
	Hence, without loss of generality, we may assume that either $a_0=a_d=0$ or $a_0=a_d=1$. Let $\mathfrak{g}(\partial_x,\partial_y)=\partial_y^d-\partial_x^d=\prod_{k=1}^{d}(\partial_y-\zeta^k\partial_x)\in\mathbb{C}[\partial_x,\partial_y]$, where $\zeta=e^{\frac{2\pi \operatorname{i}}{d}}$. Then $\mathfrak{g}\circ f=0$. Thus, by Theorem \ref{Th 2.4}, there exist $\lambda_1,\lambda_2,\dots,\lambda_d\in\mathbb{C}$ such that
	\begin{equation*}
		f(x,y)=\sum_{k=1}^{d}\lambda_k(x+\zeta^ky)^d. 
	\end{equation*}
	
	In summary, the Waring rank of a complex binary form of degree $d$ is at most $d$.
\end{proof}

The following example demonstrates that there exists a binary form of degree $d$ whose Waring rank is exactly $d$.

\begin{example}
	Consider the form $f(x,y)=xy^{d-1}$. Let $f^{\bot}=\{\mathfrak{g}\in\mathbb{C}[\partial_x,\partial_y]\mid \mathfrak{g}\circ f=0\}$. For $i,j\geqslant 0$, we have 
	\begin{equation}\label{2.4}
		\partial_x^i\partial_y^j\circ f=
		\begin{cases}
			0&\operatorname{if}\ i\geqslant2\ \operatorname{or}\ j\geqslant d,\\
			\frac{(d-1)!}{(d-1-j)!}x^{1-i}y^{d-1-j}&\operatorname{otherwise}.
		\end{cases}
	\end{equation}
	It is clear that any $\mathfrak{g}\in\mathbb{C}[\partial_x,\partial_y]$ can be written in the following form: 
	\begin{equation*}
		\mathfrak{g}=\partial_x^2\cdot \mathfrak{g}_1+\partial_y^d\cdot \mathfrak{g}_2+\sum_{\substack{0\leqslant i\leqslant 1\\0\leqslant j\leqslant d-1}}a_{ij}\partial_x^i\partial_y^j, 
	\end{equation*}
	where $\mathfrak{g}_1,\mathfrak{g}_2\in\mathbb{C}[\partial_x,\partial_y]$ and $a_{ij}\in\mathbb{C}$. If $\mathfrak{g}\in f^{\bot}$, then by \eqref{2.4}, we have
	\begin{equation*}
		\mathfrak{g}\circ f=0+0+\sum_{\substack{0\leqslant i\leqslant 1\\0\leqslant j\leqslant d-1}}a_{ij}\frac{(d-1)!}{(d-1-j)!}x^{1-i}y^{d-1-j}=0. 
	\end{equation*}
	Since $1,y,\dots,y^{d-1},x,xy,\dots,xy^{d-1}$ are linearly independent, it follows that $a_{ij}=0$ for $0\leqslant i\leqslant 1$ and $0\leqslant j\leqslant d-1$. Therefore, $f^{\bot}=\langle\partial_x^2,\partial_y^d\rangle$. 
	
	Now it follows that, if $\mathfrak{g}\in f^\bot$ and $\operatorname{deg}\,\mathfrak{g}<d$, then $\partial_x^2\mid \mathfrak{g}$, hence $\mathfrak{g}$ has a repeated root. Combining Corollary \ref{Cor 2.6} and Corollary \ref{Cor 2.7}, we see that the Waring rank of $xy^{d-1}$ is exactly $d$. 
\end{example}

We conclude this section by presenting a specific example.

\begin{example}\label{Ex 2.9}
	Consider the binary form $f(x,y)=3x^3-3x^2y+9xy^2-y^3$. By direct calculation, we have
	\begin{gather*}
		\partial_x\circ f=9x^2-6xy+9y^2,\quad \partial_y\circ f=-3x^2+18xy-3y^2,\\
		\partial_x^2\circ f=18x-6y,\quad \partial_x\partial_y\circ f=-6x+18y,\quad \partial_y^2\circ f=18x-6y.
	\end{gather*}
	A simple observation shows that $\partial_x\circ f$ and $\partial_y\circ f$ are linearly independent. Thus, there does not exist a linear polynomial $\mathfrak{g}\in\mathbb{C}[\partial_x,\partial_y]$ such that $\mathfrak{g}\circ f=0$. Moreover, notice that 
	\begin{equation*}
		0=(\partial_y^2-\partial_x^2)\circ f=(\partial_y-\partial_x)(\partial_y+\partial_x)\circ f. 
	\end{equation*}
	Thus, by Corollary \ref{Cor 2.6}, the Waring rank of $f$ is $2$. 
	
	According to Theorem \ref{Th 2.4}, there exist $\lambda_1,\lambda_2\in\mathbb{C}$ such that $f(x,y)=\lambda_1(x+y)^3+\lambda_2(x-y)^3$. By the method of undetermined coefficients, we find $\lambda_1=1$ and $\lambda_2=2$. Hence, we obtain the following minimal decomposition of $f$: 
	\begin{equation*}
		f(x,y)=(x+y)^3+2(x-y)^3. 
	\end{equation*}
\end{example}

\section{The Waring decompositions}\label{Sec 3} 

In this section, we apply the crucial tools developed in Section \ref{Sec 2} and utilize the Hankel matrices constructed from coefficients of complex binary forms to address their Waring decompositions. Consequently, we determine the Waring ranks of binary forms and provide an algorithm for computing their minimal decompositions. 


\begin{theorem}\label{Th 3.1}
	Consider the complex binary form of degree $d$
	\begin{equation*}
		f(x,y)=\sum_{i=0}^{d}\binom{d}{i}a_ix^{d-i}y^i. 
	\end{equation*}
	Let $\beta_1,\beta_2,\dots,\beta_r\in\mathbb{C}$ be pairwise distinct, where $1\leqslant r\leqslant d$. Then the following statements are equivalent: 
	\begin{enumerate}
		\item[(1)] There exist $\lambda_1,\lambda_2,\dots,\lambda_r\in\mathbb{C}^*$ such that 
		\begin{equation*}
			f(x,y)=\sum_{k=1}^{r}\lambda_k(x+\beta_ky)^d. 
		\end{equation*}
		
		\item[(2)] 
		\begin{enumerate}
			\item There exist $c_0,c_1,\dots,c_{r-1}\in\mathbb{C}$ such that 
			\begin{equation}\label{3.1}
				a_i=-(c_0a_{i-r}+c_1a_{i-r+1}+\cdots+c_{r-1}a_{i-1}),\quad i=r,r+1,\dots,d. 
			\end{equation}
			For $i>d$, define inductively
			\begin{equation}\label{3.2}
				a_i=-(c_0a_{i-r}+c_1a_{i-r+1}+\cdots+c_{r-1}a_{i-1}). 
			\end{equation}
			
			\item 
			\begin{equation*}
				\operatorname{rank}\,\begin{pmatrix}
					a_0&a_1&\cdots&a_{r-1}\\
					a_1&a_2&\cdots&a_r\\
					\vdots&\vdots&\ddots&\vdots\\
					a_{r-1}&a_r&\cdots&a_{2r-2}
				\end{pmatrix}=r. 
			\end{equation*}
			
			\item The eigenvalues of the matrix
			\begin{equation*}
				\begin{pmatrix}
					a_0&a_1&\cdots&a_{r-1}\\
					a_1&a_2&\cdots&a_r\\
					\vdots&\vdots&\ddots&\vdots\\
					a_{r-1}&a_r&\cdots&a_{2r-2}
				\end{pmatrix}^{-1}
				\begin{pmatrix}
					a_1&a_2&\cdots&a_r\\
					a_2&a_3&\cdots&a_{r+1}\\
					\vdots&\vdots&\ddots&\vdots\\
					a_r&a_{r+1}&\cdots&a_{2r-1}
				\end{pmatrix}
			\end{equation*}
			are $\beta_1,\beta_2,\dots,\beta_r$. 
		\end{enumerate}
	\end{enumerate}
\end{theorem}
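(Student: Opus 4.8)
The plan is to translate everything into statements about the coefficient sequence $(a_i)$ and then feed them into the Apolarity Lemma (Theorem \ref{Th 2.4}). The key preliminary computation is the following. Write $\prod_{\ell=1}^{r}(T-\beta_\ell)=\sum_{j=0}^{r}\gamma_j T^j$, so that $\gamma_r=1$ and $\prod_{\ell=1}^{r}(\partial_y-\beta_\ell\partial_x)=\sum_{j=0}^{r}\gamma_j\,\partial_y^{j}\partial_x^{r-j}$. A direct differentiation shows that for $0\leqslant m\leqslant d-r$ the coefficient of $x^{d-r-m}y^{m}$ in $\bigl(\sum_{j=0}^{r}\gamma_j\partial_y^{j}\partial_x^{r-j}\bigr)\circ f$ equals the fixed nonzero scalar $\tfrac{d!}{(d-r)!}\binom{d-r}{m}$ times $\sum_{j=0}^{r}\gamma_j a_{m+j}$. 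Hence $\prod_{\ell=1}^{r}(\partial_y-\beta_\ell\partial_x)\circ f=0$ if and only if $\sum_{j=0}^{r}\gamma_j a_{m+j}=0$ for $m=0,\dots,d-r$, which is exactly the recurrence \eqref{3.1} with $c_j=\gamma_j$. I will also record the elementary observation that, since $(x+\beta y)^d=\sum_i\binom{d}{i}\beta^i x^{d-i}y^i$, comparing coefficients in $f=\sum_k\lambda_k(x+\beta_ky)^d$ is equivalent to $a_i=\sum_{k=1}^{r}\lambda_k\beta_k^{\,i}$ for $i=0,\dots,d$.

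For $(1)\Rightarrow(2)$: given the decomposition, the Apolarity Lemma gives $\prod_{\ell=1}^{r}(\partial_y-\beta_\ell\partial_x)\circ f=0$, so \eqref{3.1} holds with $c_j=\gamma_j$, which is (a). Extending $(a_i)$ by \eqref{3.2} keeps it a solution of the order-$r$ recurrence whose characteristic polynomial is $\prod_\ell(T-\beta_\ell)$; since $\bigl(\sum_k\lambda_k\beta_k^{\,i}\bigr)_i$ solves the same recurrence (each $\beta_k$ being a root) and agrees with $(a_i)$ on $i=0,\dots,d$, the identity $a_i=\sum_k\lambda_k\beta_k^{\,i}$ persists for all $i\geqslant0$. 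Writing $V=(\beta_k^{\,i})_{0\leqslant i\leqslant r-1,\ 1\leqslant k\leqslant r}$ for the Vandermonde matrix (invertible, as the $\beta_k$ are distinct), one reads off $H_0=V\operatorname{diag}(\lambda_k)V^{\top}$ and $H_1=V\operatorname{diag}(\lambda_k)\operatorname{diag}(\beta_k)V^{\top}$ for the two Hankel matrices in the statement. Since every $\lambda_k\neq0$, the first identity yields $\operatorname{rank}H_0=r$, which is (b), and $H_0^{-1}H_1=V^{-\top}\operatorname{diag}(\beta_k)V^{\top}$ is similar to $\operatorname{diag}(\beta_1,\dots,\beta_r)$, which is (c).

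For $(2)\Rightarrow(1)$: fix $c_0,\dots,c_{r-1}$ as in (a), set $c_r=1$, and extend $(a_i)$ by \eqref{3.2}, so that $\sum_{j=0}^{r}c_j a_{m+j}=0$ for all $m\geqslant0$. Reading this recurrence column by column gives $H_1=H_0C$, where $C$ has first $r-1$ columns $e_2,\dots,e_r$ and last column $(-c_0,\dots,-c_{r-1})^{\top}$, i.e.\ the companion matrix of $T^{r}+c_{r-1}T^{r-1}+\cdots+c_0$. By (b), $H_0$ is invertible, so $H_0^{-1}H_1=C$ and its eigenvalues are the roots of $\sum_{j=0}^{r}c_jT^{j}$; by (c) these roots are $\beta_1,\dots,\beta_r$, and as they are distinct we conclude $\sum_{j=0}^{r}c_jT^{j}=\prod_{\ell=1}^{r}(T-\beta_\ell)$, hence $\sum_{j=0}^{r}c_j\partial_y^{j}\partial_x^{r-j}=\prod_{\ell=1}^{r}(\partial_y-\beta_\ell\partial_x)$. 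The recurrence restricted to $m=0,\dots,d-r$ now says precisely that this operator annihilates $f$, so the Apolarity Lemma produces $\lambda_1,\dots,\lambda_r\in\mathbb{C}$ with $f=\sum_k\lambda_k(x+\beta_ky)^d$. To finish I must exclude $\lambda_i=0$: if some $\lambda_i=0$, comparing coefficients gives $a_m=\sum_{k\in\check i}\lambda_k\beta_k^{\,m}$ for $m=0,\dots,d$, hence (the right-hand side solving the same recurrence, since $\prod_{k\in\check i}(T-\beta_k)\mid\sum_jc_jT^j$) for all $m\geqslant0$; then every column of $H_0$ lies in the span of the $r-1$ vectors $(1,\beta_k,\dots,\beta_k^{\,r-1})^{\top}$, $k\in\check i$, forcing $\operatorname{rank}H_0\leqslant r-1$ and contradicting (b). Thus all $\lambda_k\neq0$, which is (1).

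I expect the main friction to be bookkeeping rather than any conceptual difficulty: making the opening coefficient computation airtight (tracking exactly which monomials of $f$ land on $x^{d-r-m}y^{m}$ under $\partial_y^{j}\partial_x^{r-j}$), and dealing with the regime $2r-1>d$, in which several of the entries $a_i$ appearing in $H_0$, $H_1$ and in the identity $H_1=H_0C$ are defined only through the extension \eqref{3.2} --- so one must consistently reason at the level of ``sequences satisfying a fixed linear recurrence'' instead of with the finite data $a_0,\dots,a_d$ alone. Once the dictionary of the first paragraph is in place, no individual step is hard.
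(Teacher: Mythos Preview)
Your proof is correct, but it takes a genuinely different route from the paper's. The paper proves Theorem~\ref{Th 3.1} \emph{without} invoking the Apolarity Lemma at all: in $(1)\Rightarrow(2)$ it reads off $a_i=\sum_k\lambda_k\beta_k^{\,i}$ by comparing coefficients, manufactures $c_0,\dots,c_{r-1}$ by solving the Vandermonde system $\beta_k^{\,r}=-\sum_j c_j\beta_k^{\,j}$, and checks the recurrence directly; in $(2)\Rightarrow(1)$ it recovers the same companion matrix $C=A_0^{-1}A_1$, but then \emph{constructs} $(\lambda_1,\dots,\lambda_r)$ by solving $(a_0,\dots,a_{r-1})=(\lambda_1,\dots,\lambda_r)B$ and proves $a_i=\sum_k\lambda_k\beta_k^{\,i}$ for all $i$ by the matrix induction $(a_i,\dots,a_{i+r-1})=(\lambda_1,\dots,\lambda_r)D^iB$. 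Nonvanishing of the $\lambda_k$ comes in one line from $A_0=B^{\mathrm T}\Lambda B$ with $A_0$ invertible. You, by contrast, build a dictionary ``$\prod_\ell(\partial_y-\beta_\ell\partial_x)\circ f=0\ \Longleftrightarrow\ \sum_j\gamma_j a_{m+j}=0$ for $0\le m\le d-r$'' and then let Theorem~\ref{Th 2.4} do the work in both directions; your nonvanishing argument (rank drop of $H_0$ if some $\lambda_i=0$) is the contrapositive of the paper's $A_0=B^{\mathrm T}\Lambda B$ step.

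What each buys: the paper's argument is self-contained linear algebra, in keeping with its stated aim of giving Theorem~\ref{Th 3.1} an independent eigenvalue proof and highlighting the interpretation $A_0^{-1}A_1=B^{-1}DB$. Your argument is shorter and more conceptual, making explicit that Theorem~\ref{Th 3.1} is essentially the Apolarity Lemma rewritten in terms of Hankel matrices via the recurrence--annihilation correspondence; the Vandermonde factorization $H_0=V\Lambda V^{\mathrm T}$, $H_1=V\Lambda D V^{\mathrm T}$ is common to both. Your cautionary remark about the regime $2r-1>d$ is well placed and matches the paper's implicit use of the extension~\eqref{3.2}; nothing further is needed there.
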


\begin{proof}
	For convenience of the exposition, we denote $\Lambda=\operatorname{diag}\,(\lambda_1,\lambda_2,\dots,\lambda_r)$, $D=\operatorname{diag}\,(\beta_1,\beta_2,\dots,\beta_r)$, and
	\begin{equation*}
		A_0=\begin{pmatrix}
			a_0&a_1&\cdots&a_{r-1}\\
			a_1&a_2&\cdots&a_r\\
			\vdots&\vdots&\ddots&\vdots\\
			a_{r-1}&a_r&\cdots&a_{2r-2}
		\end{pmatrix},\quad
		A_1=\begin{pmatrix}
			a_1&a_2&\cdots&a_r\\
			a_2&a_3&\cdots&a_{r+1}\\
			\vdots&\vdots&\ddots&\vdots\\
			a_r&a_{r+1}&\cdots&a_{2r-1}
		\end{pmatrix},\quad
		B=\begin{pmatrix}
			1&\beta_1&\cdots&\beta_1^{r-1}\\
			1&\beta_2&\cdots&\beta_2^{r-1}\\
			\vdots&\vdots&\ddots&\vdots\\
			1&\beta_r&\cdots&\beta_r^{r-1}
		\end{pmatrix}. 
	\end{equation*}
	
	(1)$\Rightarrow$(2). Suppose that
	\begin{equation*}
		f(x,y)=\sum_{k=1}^{r}\lambda_k(x+\beta_ky)^d.
	\end{equation*}
	Then we have
	\begin{equation*}
		f(x,y)=\sum_{i=0}^{d}\binom{d}{i}\biggl(\sum_{k=1}^{r}\lambda_k\beta_k^i\biggl)x^{d-i}y^i. 
	\end{equation*}
	By comparing coefficients, we obtain
	\begin{equation*}
		a_i=\lambda_1\beta_1^i+\lambda_2\beta_2^i+\cdots+\lambda_r\beta_r^i,\quad i=0,1,\dots,d. 
	\end{equation*}
	For all $i>d$, we define
	\begin{equation*}
		a_i=\lambda_1\beta_1^i+\lambda_2\beta_2^i+\cdots+\lambda_r\beta_r^i. 
	\end{equation*}
	
	Since $B$ is a Vandermonde matrix in the distinct $\beta_1,\beta_2,\dots,\beta_r\in\mathbb{C}$, it is invertible. Therefore, there exists a unique set $(-c_0,-c_1,\dots,-c_{r-1})\in\mathbb{C}^r$ such that 
	\begin{equation*}
		\begin{pmatrix}
			\beta_1^r\\
			\beta_2^r\\
			\vdots\\
			\beta_r^r
		\end{pmatrix}=
		\begin{pmatrix}
			1&\beta_1&\cdots&\beta_1^{r-1}\\
			1&\beta_2&\cdots&\beta_2^{r-1}\\
			\vdots&\vdots&\ddots&\vdots\\
			1&\beta_r&\cdots&\beta_r^{r-1}
		\end{pmatrix}
		\begin{pmatrix}
			-c_0\\
			-c_1\\
			\vdots\\
			-c_{r-1}
		\end{pmatrix}. 
	\end{equation*}
	Thus, for all $i\geqslant r$, we have 
	\begin{equation*}
		\begin{pmatrix}
			\beta_1^i\\
			\beta_2^i\\
			\vdots\\
			\beta_r^i
		\end{pmatrix}=
		\begin{pmatrix}
			\beta_1^{i-r}&\beta_1^{i-r+1}&\cdots&\beta_1^{i-1}\\
			\beta_2^{i-r}&\beta_2^{i-r+1}&\cdots&\beta_2^{i-1}\\
			\vdots&\vdots&\ddots&\vdots\\
			\beta_r^{i-r}&\beta_r^{i-r+1}&\cdots&\beta_r^{i-1}
		\end{pmatrix}
		\begin{pmatrix}
			-c_0\\
			-c_1\\
			\vdots\\
			-c_{r-1}
		\end{pmatrix}. 
	\end{equation*}
	Therefore, for all $i\geqslant r$, we have 
	\begin{align*}
		a_i&=(\lambda_1,\lambda_2,\dots,\lambda_r)
		\begin{pmatrix}
			\beta_1^i\\
			\beta_2^i\\
			\vdots\\
			\beta_r^i
		\end{pmatrix}=(\lambda_1,\lambda_2,\dots,\lambda_r)
		\begin{pmatrix}
			\beta_1^{i-r}&\beta_1^{i-r+1}&\cdots&\beta_1^{i-1}\\
			\beta_2^{i-r}&\beta_2^{i-r+1}&\cdots&\beta_2^{i-1}\\
			\vdots&\vdots&\ddots&\vdots\\
			\beta_r^{i-r}&\beta_r^{i-r+1}&\cdots&\beta_r^{i-1}
		\end{pmatrix}
		\begin{pmatrix}
			-c_0\\
			-c_1\\
			\vdots\\
			-c_{r-1}
		\end{pmatrix}\nonumber\\
		&=(a_{i-r},a_{i-r+1},\dots,a_{i-1})
		\begin{pmatrix}
			-c_0\\
			-c_1\\
			\vdots\\
			-c_{r-1}
		\end{pmatrix}=-(c_0a_{i-r}+c_1a_{i-r+1}+\cdots+c_{r-1}a_{i-1}). 
	\end{align*}
	Hence item (a) follows. 
	
	From the equation
	\begin{equation*}
		a_{i+j-2}=(\beta_1^{i-1},\beta_2^{i-1},\dots,\beta_r^{i-1})
		\begin{pmatrix}
			\lambda_1&&&\\
			&\lambda_2&&\\
			&&\ddots&\\
			&&&\lambda_r
		\end{pmatrix}
		\begin{pmatrix}
			\beta_1^{j-1}\\
			\beta_2^{j-1}\\
			\vdots\\
			\beta_r^{j-1}
		\end{pmatrix}, 
	\end{equation*}
	we see that $A_0=B^{\operatorname{T}} \Lambda B$. Similarly, we have $A_1=B^{\operatorname{T}} \Lambda DB$. 
	
	Since $\beta_1,\beta_2,\dots,\beta_r$ are pairwise distinct and $\lambda_k\ne 0$ for $k=1,2,\dots,r$, both $B$ and $\Lambda$ are invertible. Consequently, $A_0$ is invertible, i.e., $\operatorname{rank}\,(A_0)=r$. Moreover, 
	\begin{equation*}
		A_0^{-1}A_1=(B^{\operatorname{T}} \Lambda B)^{-1}B^{\operatorname{T}} \Lambda DB=B^{-1}DB, 
	\end{equation*}
	which implies that the eigenvalues of $A_0^{-1}A_1$ are $\beta_1,\beta_2,\dots,\beta_r$. Thus, items (b) and (c) are proved. 
	
	(2)$\Rightarrow$(1). Assume that items (a), (b) and (c) hold. From \eqref{3.1} and \eqref{3.2}, we have the matrix equation 
	\begin{equation*}
		\begin{pmatrix}
			a_0&a_1&\cdots&a_{r-1}\\
			a_1&a_2&\cdots&a_r\\
			\vdots&\vdots&\ddots&\vdots\\
			a_{r-1}&a_r&\cdots&a_{2r-2}
		\end{pmatrix}
		\begin{pmatrix}
			-c_0\\
			-c_1\\
			\vdots\\
			-c_{r-1}
		\end{pmatrix}=
		\begin{pmatrix}
			a_r\\
			a_{r+1}\\
			\vdots\\
			a_{2r-1}
		\end{pmatrix}. 
	\end{equation*}
	Thus, 
	\begin{equation*}
		\begin{pmatrix}
			a_1&a_2&\cdots&a_r\\
			a_2&a_3&\cdots&a_{r+1}\\
			\vdots&\vdots&\ddots&\vdots\\
			a_r&a_{r+1}&\cdots&a_{2r-1}
		\end{pmatrix}=
		\begin{pmatrix}
			a_0&a_1&\cdots&a_{r-1}\\
			a_1&a_2&\cdots&a_r\\
			\vdots&\vdots&\ddots&\vdots\\
			a_{r-1}&a_r&\cdots&a_{2r-2}
		\end{pmatrix}
		\begin{pmatrix}
			0&\cdots&0&-c_0\\
			1&\cdots&0&-c_1\\
			\vdots&\ddots&\vdots&\vdots\\
			0&\cdots&1&-c_{r-1}
		\end{pmatrix}. 
	\end{equation*}
	From (b), we see that $A_0$ is invertible. Hence, 
	\begin{equation*}
		\begin{pmatrix}
			a_0&a_1&\cdots&a_{r-1}\\
			a_1&a_2&\cdots&a_r\\
			\vdots&\vdots&\ddots&\vdots\\
			a_{r-1}&a_r&\cdots&a_{2r-2}
		\end{pmatrix}^{-1}
		\begin{pmatrix}
			a_1&a_2&\cdots&a_r\\
			a_2&a_3&\cdots&a_{r+1}\\
			\vdots&\vdots&\ddots&\vdots\\
			a_r&a_{r+1}&\cdots&a_{2r-1}
		\end{pmatrix}=
		\begin{pmatrix}
			0&\cdots&0&-c_0\\
			1&\cdots&0&-c_1\\
			\vdots&\ddots&\vdots&\vdots\\
			0&\cdots&1&-c_{r-1}
		\end{pmatrix}. 
	\end{equation*}
	Therefore, the characteristic polynomial of $A_0^{-1}A_1$ is 
	\begin{equation}\label{eigenpoly}
		T(\lambda)=\lambda^r+c_{r-1}\lambda^{r-1}+\cdots+c_1\lambda+c_0. 
	\end{equation}
	Since $\beta_1,\beta_2,\dots,\beta_r$ are the eigenvalues of $A_0^{-1}A_1$ by (c), we have $T(\beta_k)=0$ for $k=1,2,\dots,r$, that is, 
	\begin{equation*}
		\begin{pmatrix}
			\beta_1^r\\
			\beta_2^r\\
			\vdots\\
			\beta_r^r
		\end{pmatrix}+c_{r-1}
		\begin{pmatrix}
			\beta_1^{r-1}\\
			\beta_2^{r-1}\\
			\vdots\\
			\beta_r^{r-1}
		\end{pmatrix}+\cdots+c_1
		\begin{pmatrix}
			\beta_1\\
			\beta_2\\
			\vdots\\
			\beta_r
		\end{pmatrix}+c_0
		\begin{pmatrix}
			1\\
			1\\
			\vdots\\
			1
		\end{pmatrix}=
		\begin{pmatrix}
			0\\
			0\\
			\vdots\\
			0
		\end{pmatrix}. 
	\end{equation*}
	Let $\mathbf{b}_i=(\beta_1^i,\beta_2^i,\dots,\beta_r^i)^{\operatorname{T}}$ for $i=0,1,\dots,r$. Then $\mathbf{b}_r=-(c_{r-1}\mathbf{b}_{r-1}+\cdots+c_1\mathbf{b}_1+c_0\mathbf{b}_0)$ and $B=(\mathbf{b}_0,\mathbf{b}_1,\dots,\mathbf{b}_{r-1})$. Therefore, 
	\begin{equation}\label{3.3}
		BA_0^{-1}A_1=(\mathbf{b}_0,\mathbf{b}_1,\dots,\mathbf{b}_{r-1})
		\begin{pmatrix}
			0&\cdots&0&-c_0\\
			1&\cdots&0&-c_1\\
			\vdots&\ddots&\vdots&\vdots\\
			0&\cdots&1&-c_{r-1}
		\end{pmatrix}=(\mathbf{b}_1,\mathbf{b}_2,\dots,\mathbf{b}_r)=DB. 
	\end{equation}
	Since $B$ is invertible, there exists a unique vector  $(\lambda_1,\lambda_2,\dots,\lambda_r)\in\mathbb{C}^r$ such that $(a_0,a_1,\dots,a_{r-1})=(\lambda_1,\lambda_2,\dots,\lambda_r)B$. Thus, 
	\begin{align*}
		(a_1,a_2,\dots,a_r)&=(a_0,a_1,\dots,a_{r-1})A_0^{-1}A_1\\
		&=(\lambda_1,\lambda_2,\dots,\lambda_r)BA_0^{-1}A_1\\
		&=(\lambda_1,\lambda_2,\dots,\lambda_r)DB. 
	\end{align*}
	According to \eqref{3.1}, \eqref{3.2}, \eqref{3.3} and by induction, for any $i\geqslant 1$, the following holds: 
	\begin{align*}
		(a_i,a_{i+1},\dots,a_{i+r-1})&=(a_{i-1},a_i,\dots,a_{i+r-2})A_0^{-1}A_1\\
		&=\cdots\\
		&=(a_0,a_1,\dots,a_{r-1})(A_0^{-1}A_1)^i\\
		&=(\lambda_1,\lambda_2,\dots,\lambda_r)DB(A_0^{-1}A_1)^{i-1}\\
		&=\cdots\\
		&=(\lambda_1,\lambda_2,\dots,\lambda_r)D^iB. 
	\end{align*}
	Hence, for any $i\geqslant 0$, we have
	\begin{equation}\label{3.5}
		a_i=\lambda_1\beta_1^i+\lambda_2\beta_2^i+\cdots+\lambda_r\beta_r^i. 
	\end{equation}
	Consequently, 
	\begin{equation*}
		f(x,y)=\sum_{i=0}^{d}\binom{d}{i}\biggl(\sum_{k=1}^{r}\lambda_k\beta_k^i\biggl)x^{d-i}y^i=\sum_{k=1}^{r}\lambda_k(x+\beta_ky)^d. 
	\end{equation*}
	
	From \eqref{3.5}, we have 
	\begin{equation*}
		a_{i+j-2}=(\beta_1^{i-1},\beta_2^{i-1},\dots,\beta_r^{i-1})
		\begin{pmatrix}
			\lambda_1&&&\\
			&\lambda_2&&\\
			&&\ddots&\\
			&&&\lambda_r
		\end{pmatrix}
		\begin{pmatrix}
			\beta_1^{j-1}\\
			\beta_2^{j-1}\\
			\vdots\\
			\beta_r^{j-1}
		\end{pmatrix}. 
	\end{equation*}
	Thus, we have $A_0=B^{\operatorname{T}} \Lambda B$. Given that $A_0$ is invertible, it follows that $\lambda_k\ne 0$ for $k=1,2,\dots,r$. 
\end{proof}

\begin{remark}
	The above theorem demonstrates that each term $(x+\beta_k y)^d$ in a Waring decomposition of a given binary form corresponds to the eigenvalue $\beta_k$ of $A_0^{-1}A_1$, thereby providing a linear-algebraic interpretation for the decomposition summands. Moreover, the coefficients of the terms in a Waring decomposition also exhibit a clear linear-algebraic significance. They are the diagonal elements of the matrix obtained via the congruence of the Hankel matrix $A_0$ by the Vandermonde matrix determined by the aforementioned eigenvalues.
	Somehow this explains why the Waring decomposition of forms is also referred to as an interpolation problem of multivariate polynomials \cite{AH}. On the other hand, if a Waring decomposition of a binary form includes the term $y^d$, this situation corresponds to the case where the eigenvalue $\beta$ is $\infty$ in the theorem, thus necessitating separate treatment. 
	
\end{remark}

Suppose $f\in\mathcal{R}_d$ has a Waring decomposition of the form given in Theorem \ref{Th 3.1}:
\begin{equation*}
	f(x,y)=\sum_{k=1}^r\lambda_k(x+\beta_ky)^d,
\end{equation*}
where $\beta_k\ne\infty$ for $k=1,2,\dots,r$. We refer to this as an $F$-decomposition of $f$. The least $r$ for which the above decomposition holds is called the $F$-rank of $f$, and this is denoted by $\operatorname{FR}\,(f)=r$. An $F$-decomposition of $f$ of length $\operatorname{FR}\,(f)$ is called a minimal $F$-decomposition. Here, the $F$ stands for ``finite", indicating that all the eigenvalues included in the decomposition are finite numbers. 

Given any complex binary form $f$, its minimal $F$-decompositions can be obtained via Theorem \ref{Th 3.1}. If there exists a minimal decomposition of $f$ that does not include the term $y^d$, then a minimal $F$-decomposition of $f$ coincides with a minimal Waring decomposition. If every minimal decomposition of $f$ necessarily includes the term $y^d$, then one can first compute a minimal $F$-decomposition of $f_x=\frac{\partial f}{\partial x}$, and subsequently integrate $f_x$ with respect to $x$ to obtain a minimal decomposition of $f$. As a matter of fact, we have the following observations. 


\begin{proposition}\label{Pro 3.3}
	Let $f\in\mathcal{R}_d$ and denote $f_x=\frac{\partial f}{\partial x}$. Then the following hold: 
	\begin{enumerate}
		\item[(1)] $\operatorname{FR}\,(f)\geqslant\operatorname{WR}\,(f)\geqslant\operatorname{FR}\,(f_x)\geqslant\operatorname{WR}\,(f)-1$. 
		\item[(2)] If $\operatorname{FR}\,(f)=\operatorname{FR}\,(f_x)$, then $\operatorname{WR}\,(f)=\operatorname{FR}\,(f)$. 
		\item[(3)] If $\operatorname{FR}\,(f)>\operatorname{FR}\,(f_x)$, then $\operatorname{WR}\,(f)=\operatorname{FR}\,(f_x)+1$. 
	\end{enumerate}
\end{proposition}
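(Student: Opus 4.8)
The whole argument should rest on two elementary operations relating $f$ and $f_x$: differentiating a decomposition of $f$ with respect to $x$, and integrating a decomposition of $f_x$ with respect to $x$. Two preliminary remarks make the bookkeeping clean. First, by Remark \ref{Rmk 2.3} (equivalently Lemma \ref{lemma 2.1}) every form of any degree admits an $F$-decomposition, so $\operatorname{FR}\,(\cdot)$ is a well-defined nonnegative integer, with the convention $\operatorname{FR}\,(0)=0$. Second, in any \emph{minimal} Waring decomposition the linear forms are pairwise linearly independent (otherwise two summands combine into one or cancel), so at most one summand is a scalar multiple of $y^d$. My plan is to prove (1) first, then deduce (2) immediately and (3) by a short case analysis.

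For (1) I would establish the three inequalities in turn. The bound $\operatorname{FR}\,(f)\geqslant\operatorname{WR}\,(f)$ is immediate, since an $F$-decomposition $f=\sum_k\lambda_k(x+\beta_ky)^d$ with $\lambda_k\in\mathbb{C}^*$ is in particular a Waring decomposition. For $\operatorname{WR}\,(f)\geqslant\operatorname{FR}\,(f_x)$, take a minimal Waring decomposition $f=\sum_{k=1}^{r}\lambda_kL_k^d$ with $r=\operatorname{WR}\,(f)$, write $L_k=a_kx+b_ky$, and differentiate to get $f_x=d\sum_k\lambda_ka_kL_k^{d-1}$. The summands with $a_k=0$ (there is at most one, a multiple of $y$) disappear; each surviving summand equals $d\lambda_ka_k^d(x+\beta_ky)^{d-1}$ with $\beta_k=b_k/a_k$ finite, the $\beta_k$ being pairwise distinct by the independence of the $L_k$, and the coefficients nonzero. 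This is an $F$-decomposition of $f_x$ of length at most $r$, so $\operatorname{FR}\,(f_x)\leqslant\operatorname{WR}\,(f)$. For $\operatorname{FR}\,(f_x)\geqslant\operatorname{WR}\,(f)-1$, take a minimal $F$-decomposition $f_x=\sum_{k=1}^{s}\mu_k(x+\beta_ky)^{d-1}$ with $s=\operatorname{FR}\,(f_x)$; since $\frac{\partial}{\partial x}\bigl(\tfrac1d(x+\beta_ky)^d\bigr)=(x+\beta_ky)^{d-1}$, the form $f-\tfrac1d\sum_k\mu_k(x+\beta_ky)^d$ has vanishing $x$-derivative, hence equals $cy^d$ for some $c\in\mathbb{C}$. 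Thus $f=\tfrac1d\sum_k\mu_k(x+\beta_ky)^d+cy^d$ is a Waring decomposition with at most $s+1$ terms, so $\operatorname{WR}\,(f)\leqslant\operatorname{FR}\,(f_x)+1$.

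Statement (2) is then immediate: if $\operatorname{FR}\,(f)=\operatorname{FR}\,(f_x)$, the chain $\operatorname{FR}\,(f)\geqslant\operatorname{WR}\,(f)\geqslant\operatorname{FR}\,(f_x)$ from (1) forces $\operatorname{WR}\,(f)=\operatorname{FR}\,(f)$. For (3), part (1) already confines $\operatorname{WR}\,(f)$ to the two values $\operatorname{FR}\,(f_x)$ and $\operatorname{FR}\,(f_x)+1$, so it suffices to rule out $\operatorname{WR}\,(f)=\operatorname{FR}\,(f_x)=:s$. Assuming this, fix a minimal Waring decomposition $f=\sum_{k=1}^{s}\lambda_kL_k^d$. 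If no $L_k$ is a multiple of $y$, rescaling each $L_k$ to $x+\beta_ky$ turns this into an $F$-decomposition of $f$ of length $s$, so $\operatorname{FR}\,(f)\leqslant s=\operatorname{FR}\,(f_x)$, contradicting the hypothesis $\operatorname{FR}\,(f)>\operatorname{FR}\,(f_x)$. If exactly one $L_k$ is a multiple of $y$, differentiating in $x$ as above kills that term and exhibits $f_x$ as an $F$-decomposition of length $s-1$, so $\operatorname{FR}\,(f_x)\leqslant s-1<s$, again absurd. Hence $\operatorname{WR}\,(f)=\operatorname{FR}\,(f_x)+1$. The only delicate point — the main (minor) obstacle — is the bookkeeping around the summand $y^d$: one must use pairwise independence of the linear forms in a minimal decomposition to ensure that at most one summand is a multiple of $y^d$, and one should separately note the degenerate case $f_x=0$, i.e. $f=cy^d$ with $c\in\mathbb{C}^*$, where $\operatorname{FR}\,(f_x)=0$, $\operatorname{FR}\,(f)\geqslant1$ and $\operatorname{WR}\,(f)=1=\operatorname{FR}\,(f_x)+1$, consistently with all three claims.
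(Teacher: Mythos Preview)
Your proposal is correct and follows essentially the same route as the paper: all three inequalities in (1) come from the differentiate/integrate maneuver between decompositions of $f$ and $f_x$, (2) is squeezed from (1), and (3) is a short case analysis on whether a minimal Waring decomposition of $f$ contains a $y^d$ summand. The only cosmetic differences are that the paper phrases the bound $\operatorname{FR}\,(f_x)\geqslant\operatorname{WR}\,(f)-1$ by contradiction rather than directly, and in (3) organizes the cases as $\operatorname{FR}\,(f)=\operatorname{WR}\,(f)$ versus $\operatorname{FR}\,(f)>\operatorname{WR}\,(f)$ rather than as your ``$y^d$ present or not''---but these are the same argument repackaged.
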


\begin{proof}
	Denote $\operatorname{WR}\,(f)=r$ and $\operatorname{FR}\,(f_x)=s$. 
	
	(1) It is clear that $\operatorname{FR}\,(f)\geqslant\operatorname{WR}\,(f)$. 
	
	We now prove that $\operatorname{WR}\,(f)\geqslant\operatorname{FR}\,(f_x)$. If $f$ has a minimal decomposition that does not include the term $y^d$, let this minimal decomposition of $f$ be given by 
	\begin{equation*}
		f(x,y)=\sum_{k=1}^r\lambda_k(x+\beta_ky)^d. 
	\end{equation*}
	Taking the partial derivative of $f(x,y)$ with respect to $x$, we obtain an $F$-decomposition of $f_x$: 
	\begin{equation*}
		f_x(x,y)=d\sum_{k=1}^r\lambda_k(x+\beta_ky)^{d-1}. 
	\end{equation*}
	Thus, $\operatorname{FR}\,(f_x)\leqslant r=\operatorname{WR}\,(f)$. If there exists a minimal decomposition of $f$ that includes the term $y^d$, similarly we obtain $\operatorname{FR}\,(f_x)\leqslant r-1<\operatorname{WR}\,(f)$. 
	
	Finally, we show that $\operatorname{FR}\,(f_x)\geqslant\operatorname{WR}\,(f)-1$. Assume the contrary, that $\operatorname{FR}\,(f_x)<r-1$. Let one minimal $F$-decomposition of $f_x$ be
	\begin{equation*}
		f_x(x,y)=\sum_{k=1}^s\mu_k(x+\gamma_ky)^{d-1}. 
	\end{equation*}
	Integrating $f_x(x,y)$ with respect to $x$, we obtain a Waring decomposition of $f$: 
	\begin{equation*}
		f(x,y)=\frac{1}{d}\sum_{k=1}^s\mu_k(x+\gamma_ky)^d+\mu_{s+1}y^d. 
	\end{equation*}
	Consequently, $\operatorname{WR}\,(f)\leqslant s+1<r$, which is a contradiction! Therefore, $\operatorname{FR}\,(f_x)\geqslant r-1=\operatorname{WR}\,(f)-1$. 
	
	(2) This follows immediately from (1). 
	
	(3) Suppose that $\operatorname{FR}\,(f)>\operatorname{FR}\,(f_x)$. If $\operatorname{FR}\,(f)=\operatorname{WR}\,(f)$, then it follows that $\operatorname{WR}\,(f) > \operatorname{FR}\,(f_x)$. According to (1), this forces that $\operatorname{FR}\,(f_x)=\operatorname{WR}\,(f)-1$. If $\operatorname{FR}\,(f)>\operatorname{WR}\,(f)$, then every minimal decomposition of  $f$ must include the term $y^d$. Taking the derivative, we see that $\operatorname{WR}\,(f)>\operatorname{FR}\,(f_x)$. Furthermore, in conjunction with (1), it also follows that $\operatorname{FR}\,(f_x)=\operatorname{WR}\,(f)-1$.
\end{proof}

\begin{remark}\label{Rmk 3.4}
	Combining Theorem \ref{Th 3.1} and Proposition \ref{Pro 3.3}, we can solve the Waring problem of any complex binary form. In fact, for any $f\in \mathcal{R}_d$, first we can determine the $F$-ranks of $f$ and $f_x$. If $\operatorname{FR}\,(f)=\operatorname{FR}\,(f_x)$, then $\operatorname{WR}\,(f)=\operatorname{FR}\,(f)$ and so a minimal $F$-decomposition of $f$ is a minimal decomposition. If $\operatorname{FR}\,(f)>\operatorname{FR}\,(f_x)$, then $\operatorname{WR}\,(f)=\operatorname{FR}\,(f_x)+1$. In this case, integrating a minimal $F$-decomposition of $f_x$ with respect to $x$ yields a minimal decomposition of $f$. 
\end{remark}

Furthermore, we have the following results regarding the uniqueness of minimal decompositions of binary forms. Before proceeding, we introduce two notations. Let $q\in \mathbb{Q}$. We define the following: 
\begin{equation*}
	\lfloor q\rfloor=\{p\in\mathbb{Z}\mid q-1<p\leqslant q\},\quad\lceil q\rceil=\{p\in\mathbb{Z}\mid q\leqslant p<q+1\}. 
\end{equation*}
Clearly, when $q\in\mathbb{Z}$, we have $\lfloor q\rfloor=\lceil q\rceil=q$. 

\begin{proposition}\label{Pro 3.5}
	Let $f\in \mathcal{R}_d$ and $r\leqslant \lfloor\frac{d+1}{2}\rfloor$. Suppose there exist $\lambda_1,\lambda_2,\dots,\lambda_r\in\mathbb{C}^*$ and pairwise distinct $\beta_1,\beta_2,\dots,\beta_r\in\mathbb{C}$ such that 
	\begin{equation*}
		f(x,y)=\sum_{k=1}^{r}\lambda_k(x+\beta_ky)^d. 
	\end{equation*}
	Then, $\operatorname{WR}\,(f)=r$, and $f$ has a unique minimal decomposition. 
\end{proposition}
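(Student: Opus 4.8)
The plan is to read off invariants of $f$ from the Hankel matrices of Theorem~\ref{Th 3.1} and to observe that, under the standing hypothesis $r\leqslant\lfloor\frac{d+1}{2}\rfloor$ (equivalently $2r-1\leqslant d$), these matrices are determined by $f$ alone, not by any chosen decomposition. Write $f=\sum_{i=0}^{d}\binom{d}{i}a_ix^{d-i}y^i$ and let $A_0=(a_{i+j})_{0\leqslant i,j\leqslant r-1}$ and $A_1=(a_{1+i+j})_{0\leqslant i,j\leqslant r-1}$ be the two $r\times r$ Hankel matrices; their entries $a_0,\dots,a_{2r-1}$ are genuine coefficients of $f$ precisely because $2r-1\leqslant d$. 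Since the hypothesised decomposition $f=\sum_{k=1}^{r}\lambda_k(x+\beta_ky)^d$ has all $\lambda_k\in\mathbb{C}^*$ and the $\beta_k$ pairwise distinct, the implication (1)$\Rightarrow$(2) of Theorem~\ref{Th 3.1} gives $\operatorname{rank}A_0=r$ and tells us that $\beta_1,\dots,\beta_r$ are exactly the eigenvalues of $A_0^{-1}A_1$.

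Next I would bound $\operatorname{rank}A_0$ in terms of any competing decomposition. Suppose $f=\sum_{k=1}^{s}\mu_kM_k^d$ for some linear forms $M_k$ and some $s\leqslant r$. If no $M_k$ is proportional to $y$, rescale so that $M_k=x+\gamma_ky$; comparing coefficients gives $a_i=\sum_{k=1}^{s}\mu_k\gamma_k^{\,i}$ for every $0\leqslant i\leqslant d$, hence $A_0=V^{\operatorname{T}}\operatorname{diag}(\mu_1,\dots,\mu_s)V$ with $V=(\gamma_k^{\,i})$ an $s\times r$ matrix, so $\operatorname{rank}A_0\leqslant s$. If instead one $M_k$, say $M_s$, is proportional to $y$, then the summand $\mu_sy^d$ affects only $a_d$, so $a_i=\sum_{k=1}^{s-1}\mu_k\gamma_k^{\,i}$ for $0\leqslant i\leqslant d-1$; since $2r-2\leqslant d-1$, all entries of $A_0$ lie in this range and the same factorization yields $\operatorname{rank}A_0\leqslant s-1$. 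Applying this with a minimal decomposition and comparing with $\operatorname{rank}A_0=r$ forces $s\geqslant r$, whence $\operatorname{WR}\,(f)=r$; moreover the rank count shows that every minimal decomposition has length exactly $r$ and contains no term proportional to $y^d$.

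For uniqueness, take any minimal decomposition $f=\sum_{k=1}^{r}\mu_k(x+\gamma_ky)^d$ (length $r$, the $\gamma_k$ pairwise distinct, $\mu_k\in\mathbb{C}^*$, no $y^d$ term, by the previous step). Apply (1)$\Rightarrow$(2) of Theorem~\ref{Th 3.1} to this decomposition as well: the coefficients $a_i$ of $f$ are fixed, so the relevant Hankel matrices are again $A_0$ and $A_1$, and therefore $\gamma_1,\dots,\gamma_r$ must be the eigenvalues of $A_0^{-1}A_1$. Hence $\{\gamma_1,\dots,\gamma_r\}=\{\beta_1,\dots,\beta_r\}$; after reindexing so that $\gamma_k=\beta_k$, the two decompositions give $\sum_k\lambda_k(x+\beta_ky)^d=\sum_k\mu_k(x+\beta_ky)^d$, and Lemma~\ref{lemma 2.1} (linear independence of the $(x+\beta_ky)^d$) forces $\lambda_k=\mu_k$ for all $k$, proving uniqueness.

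I expect the main obstacle to be the case analysis in the middle step: one must check carefully that no entry of $A_0$ (or of $A_1$, in the last step) ever reaches the index $d$ --- this is exactly where the hypothesis $r\leqslant\lfloor\frac{d+1}{2}\rfloor$ is used --- and one must rule out the ``point at infinity'', i.e. show that no minimal decomposition can carry a $y^d$ term. Once the Hankel matrices are seen to be intrinsic invariants of $f$, everything reduces to Theorem~\ref{Th 3.1}, Lemma~\ref{lemma 2.1}, and elementary Vandermonde algebra.
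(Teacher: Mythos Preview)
Your argument is correct, but it takes a genuinely different route from the paper's. The paper proves Proposition~\ref{Pro 3.5} by a direct linear-independence count: given the hypothesised decomposition and any minimal one of length $s\leqslant r$, subtract them, observe that the resulting vanishing combination involves at most $r+s-t\leqslant 2r\leqslant d+1$ distinct $d$-th powers (where $t$ counts shared summands, and a possible $y^d$ term is handled via Remark~\ref{Rmk 2.3}), and invoke Lemma~\ref{lemma 2.1} to force $r=s=t$ and equality of coefficients. No Hankel matrices appear at all; the only input is Lemma~\ref{lemma 2.1}.

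Your approach instead goes through Theorem~\ref{Th 3.1}: you show that $\operatorname{rank}A_0$ is an intrinsic lower bound for the Waring rank (via the factorization $A_0=V^{\operatorname{T}}\operatorname{diag}(\mu)V$), that the presence of a $y^d$ term would force the even stronger bound $\operatorname{rank}A_0\leqslant s-1$, and that the eigenvalue set of $A_0^{-1}A_1$ pins down the $\beta_k$ uniquely. The key observation --- that $2r-1\leqslant d$ makes $A_0,A_1$ depend only on $f$ and not on any auxiliary recursion --- is exactly right and is what makes Theorem~\ref{Th 3.1} applicable symmetrically to both decompositions. What your approach buys is a structural explanation: the minimal decomposition is encoded in the spectrum of $A_0^{-1}A_1$, so uniqueness is automatic. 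What the paper's approach buys is economy: it needs nothing beyond Vandermonde independence, and in particular does not rely on the comparatively heavy Theorem~\ref{Th 3.1}.
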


\begin{proof}
	Let $f(x,y)=\sum_{\ell=1}^{s}\mu_\ell(x+\gamma_\ell y)^d$ be a minimal decomposition. Then, $s\leqslant r$. If this minimal decomposition includes the term $y^d$, we place it as the last term, that is, treating $\gamma_s$ as if $\gamma_s=\infty$. In this case, we have 
	\begin{equation*}
		\sum_{k=1}^{r}\lambda_k(x+\beta_ky)^d-\sum_{\ell=1}^{s}\mu_\ell(x+\gamma_\ell y)^d=0.
	\end{equation*}
	Without loss of generality, we may assume that $\beta_1=\gamma_1,\beta_2=\gamma_2,\dots,\beta_t=\gamma_t$, where $0\leqslant t\leqslant s$, and $\{\beta_{t+1},\dots,\beta_r\}\bigcap\newline\{\gamma_{t+1},\dots,\gamma_s\}=\emptyset$. Then, the above equation can be rewritten as
	\begin{equation*}
		\sum_{k=1}^{t}(\lambda_k-\mu_k)(x+\beta_ky)^d+\sum_{k=t+1}^{r}\lambda_k(x+\beta_ky)^d-\sum_{\ell=t+1}^{s}\mu_\ell(x+\gamma_\ell y)^d=0.
	\end{equation*}
	Since $s+r-t\leqslant 2r\leqslant d+1$, by Lemma \ref{lemma 2.1}, the terms $(x+\beta_1y)^d,\dots,(x+\beta_ry)^d,(x+\gamma_{t+1}y)^d,\dots,(x+\gamma_sy)^d$ are linearly independent. This forces that $r=s=t$ and $\lambda_k=\mu_k$ for $k=1,2,\dots,r$. Therefore, $\operatorname{WR}\,(f)=r$, and $f$ has a unique minimal decomposition. 
\end{proof}

We now discuss the Waring decomposition for a generic binary form of degree $d$. Hereafter, we refer to ``a generic binary form satisfying a certain propert'' in the usual sense of algebraic geometry \cite{Cox}, meaning that there exists a polynomial $F$ in the coefficients of $f$ such that all binary forms $f$ for which $F$ is nonzero satisfy this property. In this situation, almost all binary forms satisfy the property except a subset that has measure zero.

We first consider the odd degree case where $d=2r-1$. Employing Theorem \ref{Th 3.1} and its corollary, we demonstrate that the set of all binary forms of degree $2r-1$ of Waring rank $r$ contains a dense open subset of $\mathcal{R}_{2r-1}$, which provides an elementary proof of the famous Sylvester's 1851 Theorem \cite{Sylvester}.

\begin{corollary}\label{Cor 3.6}
	Let $d=2r-1$. For a generic binary form $f(x,y)=\sum_{i=0}^{d}\binom{d}{i}a_ix^{d-i}y^i\in \mathcal{R}_d$, there exist $\lambda_1,\lambda_2,\dots,\lambda_r\in\mathbb{C}^*$ and pairwise distinct $\beta_1,\beta_2,\dots,\beta_r\in\mathbb{C}$ such that 
	\begin{equation*}
		f(x,y)=\sum_{k=1}^{r}\lambda_k(x+\beta_ky)^d. 
	\end{equation*}
	In particular, the Waring rank of $f$ is $r$, and the minimal decomposition is unique. 
\end{corollary}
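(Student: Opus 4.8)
The plan is to deduce Corollary \ref{Cor 3.6} from Theorem \ref{Th 3.1} and Proposition \ref{Pro 3.5} by exhibiting a single nonzero polynomial in the coefficients $a_0,\dots,a_{2r-1}$ whose nonvanishing forces condition (2) of Theorem \ref{Th 3.1} to hold with exactly $r$ terms and pairwise distinct $\beta_k$. Since $d=2r-1$, the two Hankel matrices $A_0$ and $A_1$ appearing in the theorem use precisely the coefficients $a_0,\dots,a_{2r-1}$ that are actually available, so no inductive extension \eqref{3.2} is needed to set up the matrices; this is the reason the odd case $d=2r-1$ is the clean one. First I would set $G_1(a_0,\dots,a_{2r-1})=\det A_0$. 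On the locus $G_1\ne 0$, the matrix $A_0$ is invertible, so item (b) of Theorem \ref{Th 3.1}(2) holds automatically, the vector $(-c_0,\dots,-c_{r-1})$ is uniquely determined by \eqref{3.1}, and $A_0^{-1}A_1$ is a well-defined companion-type matrix whose characteristic polynomial is $T(\lambda)=\lambda^r+c_{r-1}\lambda^{r-1}+\cdots+c_0$ as in \eqref{eigenpoly}; define $\beta_1,\dots,\beta_r$ to be its roots, so item (c) is satisfied by construction.

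Next I would handle the two genericity requirements that remain: the $\beta_k$ must be pairwise distinct, and the $\lambda_k$ must be nonzero. For distinctness I would use the discriminant: $\operatorname{disc}(T)$ is a polynomial in $c_0,\dots,c_{r-1}$, hence — after clearing denominators by a suitable power of $\det A_0$ — a polynomial $G_2$ in $a_0,\dots,a_{2r-1}$, and $G_2\ne 0$ exactly says $T$ has $r$ distinct roots. For the nonvanishing of the $\lambda_k$, I would invoke the factorization $A_0=B^{\mathrm T}\Lambda B$ established inside the proof of Theorem \ref{Th 3.1}: since $B$ is Vandermonde in distinct nodes it is invertible, so $\det A_0=(\det B)^2\lambda_1\cdots\lambda_r$, whence $\det A_0\ne 0$ already forces every $\lambda_k\ne 0$. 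Thus the single polynomial $G=G_1\cdot G_2$ suffices: on $\{G\ne 0\}$ all of (a), (b), (c) hold, so Theorem \ref{Th 3.1} yields $f=\sum_{k=1}^r\lambda_k(x+\beta_k y)^d$ with $\lambda_k\in\mathbb C^*$ and distinct $\beta_k$.

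Finally I would apply Proposition \ref{Pro 3.5}: here $r\leqslant\lfloor\frac{d+1}{2}\rfloor=\lfloor\frac{2r}{2}\rfloor=r$, so the hypothesis $r\leqslant\lfloor\frac{d+1}{2}\rfloor$ is met with equality, and the proposition gives $\operatorname{WR}(f)=r$ together with uniqueness of the minimal decomposition. The last thing to check is that $G$ is not the zero polynomial, i.e. that the open set $\{G\ne 0\}$ is nonempty and therefore dense; for this I would simply produce one explicit form of rank $r$ with distinct nodes — e.g. take distinct $\beta_1,\dots,\beta_r$ and nonzero $\lambda_1,\dots,\lambda_r$, form $f=\sum\lambda_k(x+\beta_k y)^{2r-1}$, and note that by the $(1)\Rightarrow(2)$ direction of Theorem \ref{Th 3.1} this $f$ has $\det A_0\ne 0$ and $T=\prod(\lambda-\beta_k)$ with distinct roots, so $G(f)\ne 0$.

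The main obstacle is the bookkeeping needed to see that ``eigenvalues of $A_0^{-1}A_1$ are distinct'' is genuinely a polynomial condition on the $a_i$ and not merely a condition on the rational quantities $c_j$: one must argue that multiplying the discriminant of the characteristic polynomial by an appropriate power of $\det A_0$ produces a bona fide polynomial $G_2$ in $a_0,\dots,a_{2r-1}$, and that $G_2$ does not vanish identically on the hypersurface $\det A_0=0$ in a way that would make $G=G_1G_2$ trivially zero — the explicit example in the previous paragraph is what rules this out. Everything else is a direct packaging of results already proved.
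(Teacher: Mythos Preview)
Your proposal is correct and follows essentially the same route as the paper: both arguments reduce the claim to the nonvanishing of a single polynomial built from $\det A_0$ and a discriminant/resultant controlling distinctness of the eigenvalues, then exhibit an explicit rank-$r$ form to certify that this polynomial is not identically zero, and finally invoke Proposition~\ref{Pro 3.5} for the rank and uniqueness. The only technical difference is that the paper works with $T_d(\lambda)=\det(\lambda A_0-A_1)$ and $R=\operatorname{Res}(T_d,T_d',\lambda)$ directly, which is already a polynomial in $a_0,\dots,a_{2r-1}$ and so sidesteps your denominator-clearing step for $G_2$; your separate verification that every $\lambda_k\ne 0$ is also redundant, since this is already part of the conclusion of Theorem~\ref{Th 3.1}(1).
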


\begin{proof}
	We retain the notations introduced in Theorem \ref{Th 3.1}. Denote the determinant of $A_0$ by $\operatorname{det}\,(A_0)=D(a_0,a_1,\dots,\newline a_{d-1})$. Let $T_d(\lambda)=\operatorname{det}\,(\lambda A_0-A_1)$ and $T_d'(\lambda)=\frac{\operatorname{d}}{\operatorname{d}\lambda}T_d(\lambda)$, and we denote the resultant of $T_d$ and $T_d'$ with respect to $\lambda$ by $\operatorname{Res}\,(T_d,T_d',\lambda)=R(a_0,a_1,\dots,a_d)$. It is well known that $T_d(\lambda)$ has no repeated roots if and only if $\operatorname{Res}\,(T_d,T_d',\lambda)\ne 0$. 
	
	According to Theorem \ref{Th 3.1}, there exist $\lambda_1,\lambda_2,\dots,\lambda_r\in\mathbb{C}^*$ and pairwise distinct $\beta_1,\beta_2,\dots,\beta_r\in\mathbb{C}$ such that 
	\begin{equation*}
		f(x,y)=\sum_{k=1}^{r}\lambda_k(x+\beta_ky)^d,
	\end{equation*}
	if and only if conditions (a), (b), and (c) in item (2) of Theorem \ref{Th 3.1} are all satisfied. 
	
	If (b) holds, that is, $\operatorname{rank}\,(A_0)=r$, then the system of equations 
	\begin{equation*}
		\begin{pmatrix}
			a_0&a_1&\cdots&a_{r-1}\\
			a_1&a_2&\cdots&a_r\\
			\vdots&\vdots&\ddots&\vdots\\
			a_{r-1}&a_r&\cdots&a_{2r-2}
		\end{pmatrix}
		\begin{pmatrix}
			x_0\\
			x_1\\
			\vdots\\
			x_{r-1}
		\end{pmatrix}=
		\begin{pmatrix}
			a_r\\
			a_{r+1}\\
			\vdots\\
			a_{2r-1}
		\end{pmatrix}
	\end{equation*}
	has a solution, and thus (a) holds. Therefore, items (a), (b), and (c) are simultaneously satisfied if and only if so are\newline (b) and (c). Specifically, $\operatorname{rank}\,(A_0)=r$ and $T_d(\lambda)$ has no repeated roots, which is equivalent to $D(a_0,a_1,\dots,a_{d-1})R(a_0,\newline a_1,\dots,a_d)\ne 0$. 
	
	By Proposition \ref{Pro 3.5}, the Waring rank of $\sum_{k=1}^{r}(x+ky)^{2r-1}$ is $r$. Hence, $DR$ is a nonzero polynomial. Consequently, the set of points where $DR\ne 0$ forms a dense open set of $\mathbb{A}^{d+1}$, see \cite{Cox}. 
\end{proof}

We now turn to the Waring decompositions of even degree generic binary forms. The relevant results were first established by Gundelfinger in 1887 by extending the Hessian to higher orders \cite{Gundelfinger}. Herein, we develop a natural connection between the Waring decompositions of binary forms of odd and even degrees via integration and differentiation, and provide an elementary proof of Gundelfinger's Theorem with a help of the results for odd forms. 


\begin{corollary}\label{Cor 3.7}
	Let $d=2r-2$. For a generic binary form $f(x,y)=\sum_{i=0}^{d}\binom{d}{i}a_ix^{d-i}y^i\in \mathcal{R}_d$, there exist $\lambda_1,\lambda_2\dots,\lambda_r\in\mathbb{C}^*$ and pairwise distinct $\beta_1,\beta_2,\dots,\beta_r\in\mathbb{C}$ such that 
	\begin{equation*}
		f(x,y)=\sum_{k=1}^{r}\lambda_k(x+\beta_ky)^d, 
	\end{equation*}
	and the Waring rank of $f$ is $r$. Moreover, a generic binary form of degree $d$ and Waring rank $r$ admits infinitely many such decompositions. 
\end{corollary}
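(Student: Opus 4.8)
The plan is to squeeze Corollary \ref{Cor 3.7} between two bounds: the upper bound $\operatorname{WR}(f)\le r$ will come from the odd case via the differentiation bridge of Proposition \ref{Pro 3.3}, while the matching lower bound $\operatorname{WR}(f)\ge r$ will come from the non-vanishing of the $r\times r$ catalecticant (Hankel matrix) of $f$. The final clause — infinitely many decompositions — will be read off from the fact that for $d=2r-2$ the system \eqref{3.1} in Theorem \ref{Th 3.1} is \emph{under}-determined, leaving one free parameter that indexes a pencil of $F$-decompositions.

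\textbf{The two bounds.} Write $f_x=\partial f/\partial x\in\mathcal R_{d-1}$. Since $d-1=2(r-1)-1$ is odd and $f\mapsto f_x$ is a surjective linear map (hence pulls dense open sets back to dense open sets), Corollary \ref{Cor 3.6} gives $\operatorname{FR}(f_x)=r-1$ for generic $f$, so $\operatorname{WR}(f)\le\operatorname{FR}(f_x)+1=r$ by Proposition \ref{Pro 3.3}(1). For the lower bound, retain the notation of Theorem \ref{Th 3.1} with this $r$ and set $A_0=(a_{i+j-2})_{1\le i,j\le r}$, a Hankel matrix in the coefficients $a_0,\dots,a_{2r-2}=a_0,\dots,a_d$. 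Evaluating on $f^{\star}=\sum_{k=1}^{r}(x+ky)^{d}$ gives $A_0=V^{\mathrm T}V$ with $V$ the invertible $r\times r$ Vandermonde matrix of $1,\dots,r$, so $\det A_0$ is a nonzero polynomial in $a_0,\dots,a_d$. I claim $\det A_0\ne0$ forces $\operatorname{WR}(f)\ge r$. Indeed, if $\operatorname{WR}(f)=s\le r-1$, a minimal decomposition of $f$ is either $f=\sum_{k=1}^{s}\lambda_k(x+\beta_ky)^{d}$ with the $\beta_k$ distinct, in which case the factorization $A_0=\tilde B^{\mathrm T}\tilde\Lambda\tilde B$ with $\tilde B$ the $s\times r$ Vandermonde of the $\beta_k$ (exactly as in the proof of Theorem \ref{Th 3.1}) gives $\operatorname{rank}A_0\le s<r$; or it involves the term $y^{d}$, say $f=g+\lambda_s y^{d}$ with $\operatorname{WR}(g)\le s-1$, and then $A_0$ agrees with the $r\times r$ Hankel matrix of $g$ except in its $(r,r)$-entry, so $\operatorname{rank}A_0\le (s-1)+1=s\le r-1<r$. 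Either way $\det A_0=0$, a contradiction. The same dichotomy shows that on $\{\det A_0\ne0\}$ some minimal decomposition avoids $y^{d}$, i.e. $\operatorname{FR}(f)=\operatorname{WR}(f)$; combined with the upper bound this yields $\operatorname{WR}(f)=\operatorname{FR}(f)=r$, together with an $F$-decomposition $f=\sum_{k=1}^{r}\lambda_k(x+\beta_ky)^{d}$ with all $\lambda_k\in\mathbb C^{*}$ and all $\beta_k$ distinct, on the dense open set $\{\det A_0\ne0\}$.

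\textbf{Infinitely many decompositions.} Fix such a generic $f$. In Theorem \ref{Th 3.1}, since $d=2r-2$, condition \eqref{3.1} consists of only $r-1$ equations for the $r$ unknowns $c_0,\dots,c_{r-1}$, its coefficient matrix being the first $r-1$ rows of the invertible $A_0$; its solution set is thus an affine line, naturally parametrized by $t:=a_{2r-1}\in\mathbb C$ via $\mathbf c(t)=-A_0^{-1}(a_r,\dots,a_{2r-2},t)^{\mathrm T}$ (the last row of this linear system is then the instance $i=2r-1$ of \eqref{3.2}, forcing the inductively defined $a_{2r-1}$ to equal $t$). Each $t$ yields a monic polynomial $T_t(\lambda)=\lambda^{r}+c_{r-1}(t)\lambda^{r-1}+\cdots+c_0(t)$, the characteristic polynomial of $A_0^{-1}A_1$, whose coefficients are affine in $t$; its discriminant is a polynomial in $t$ that is not identically zero, since on $f^{\star}$ with $t^{\star}=\sum_{k=1}^{r}k^{2r-1}$ one gets $T_{t^{\star}}(\lambda)=\prod_{k=1}^{r}(\lambda-k)$, which has $r$ distinct roots. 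Hence for all but finitely many $t$ the roots $\beta_1(t),\dots,\beta_r(t)$ of $T_t$ are distinct, conditions (a),(b),(c) of Theorem \ref{Th 3.1} all hold, and we obtain an $F$-decomposition $f=\sum_{k=1}^{r}\lambda_k(t)\bigl(x+\beta_k(t)y\bigr)^{d}$ whose reconstructed sequence \eqref{3.5} satisfies $\sum_{k}\lambda_k(t)\beta_k(t)^{2r-1}=t$. Distinct values of $t$ therefore give distinct decompositions, each of length $r=\operatorname{WR}(f)$, so $f$ has infinitely many minimal decompositions.

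\textbf{Main obstacle.} The delicate step is the lower bound $\operatorname{WR}(f)\ge r$ (rather than merely $\le r$): it hinges on $\det A_0$ being a nonzero polynomial and, crucially, on the separate rank estimate needed when a short decomposition uses the term $y^{d}$ — the ``$\beta=\infty$'' summand — since that coefficient lands in the corner entry of $A_0$ and is not absorbed by a single Vandermonde factorization. For the last clause, the only other nonroutine point is verifying that $\operatorname{disc}(T_t)$ is not identically zero in the parameter $t$, which is handled by exhibiting one explicit rank-$r$ example with distinct nodes.
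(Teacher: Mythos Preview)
Your route is correct in substance but differs from the paper's. For the upper bound you \emph{differentiate} to degree $d-1=2(r-1)-1$ and combine Corollary~\ref{Cor 3.6} with Proposition~\ref{Pro 3.3}; the paper instead \emph{integrates} $f$ to an antiderivative $\tilde f$ of odd degree $d+1=2r-1$ with a free constant of integration $a_{d+1}$, applies Corollary~\ref{Cor 3.6} to $\tilde f$, and differentiates back --- this has the advantage of producing an explicit $F$-decomposition of length $r$ immediately, which the paper then pairs against any hypothetical shorter decomposition via a Proposition~\ref{Pro 3.5}-style linear-independence count ($s+r\le 2r-1=d+1$) for the lower bound, whereas you use the classical catalecticant bound $\operatorname{rank}A_0\le\operatorname{WR}(f)$. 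For the infinitely-many clause the two arguments essentially coincide: your parameter $t=a_{2r-1}$ is precisely the paper's constant of integration $a_{d+1}$, and your invariant $\sum_k\lambda_k(t)\beta_k(t)^{2r-1}=t$ plays the same separating role as the paper's determinant identity $T_d(\lambda)=-a_{d+1}T_{d-2}(\lambda)+S(\lambda)$. Your catalecticant lower bound is cleaner and more standard; the paper's integration device is more self-contained in that it delivers the $F$-decomposition and the one-parameter family in a single stroke.

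Two small points to tighten. First, ``the same dichotomy shows that on $\{\det A_0\ne0\}$ some minimal decomposition avoids $y^d$'' is not justified: with $s=r$ both branches of your dichotomy give only $\operatorname{rank}A_0\le r$, which is no contradiction. You do not actually need this claim, since your third paragraph explicitly constructs length-$r$ $F$-decompositions via Theorem~\ref{Th 3.1}; simply postpone the conclusion $\operatorname{FR}(f)=r$ until after that construction. Second, in the discriminant step you evaluate at $f^{\star}$ after having already fixed a generic $f$; to make this rigorous, regard $\operatorname{disc}(T_t)$ as a polynomial in $(a_0,\dots,a_d,t)$, note it is nonzero at $(f^{\star},t^{\star})$, and then further shrink the generic locus to where its leading $t$-coefficient (a polynomial in $a_0,\dots,a_d$) does not vanish --- this is exactly the paper's ``$B_t$'' step.
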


\begin{proof}		
	Taking the indefinite integral of $f(x,y)$ with respect to $x$, we obtain 
	\begin{equation*}
		\tilde{f}(x,y)=\int f(x,y)\,\mathrm{d}x=\sum_{i=0}^{d}\binom{d}{i}\frac{a_i}{d+1-i}x^{d+1-i}y^i+\frac{a_{d+1}}{d+1}y^{d+1}=\frac{1}{d+1}\sum_{i=0}^{d+1}\binom{d+1}{i}a_ix^{d+1-i}y^i. 
	\end{equation*}
	We retain the notations $D(a_0,a_1,\dots,a_d)$ and $R(a_0,a_1,\dots,a_{d+1})$ from the proof of Corollary \ref{Cor 3.6}. Then $DR$ is a nonzero polynomial. Let the highest degree of $a_{d+1}$ in $DR$ be $t$, and we can express $DR$ in the following form: 
	\begin{equation*}
		D(a_0,a_1,\dots,a_d)R(a_0,a_1,\dots,a_{d+1})=\sum_{j=0}^{t}B_j(a_0,a_1,\dots,a_d)a_{d+1}^j. 
	\end{equation*}
	It is easy to see that $B_t(a_0,a_1,\dots,a_d)$ is a nonzero polynomial. Consequently, the set of points where $B_t\ne 0$ forms a dense open subset of $\mathbb{A}^{d+1}$, which we denote by $V$. 
	
	Take $(a_0,a_1,\dots,a_d)\in V$. Then $DR$ is a nonzero polynomial in $a_{d+1}$, and thus has only finitely many zeros. Choosing $a_{d+1}$ such that $DR\ne 0$, it follows from Corollary \ref{Cor 3.6} that the Waring rank of $\tilde{f}(x,y)$ is $r$. Moreover, there exist $\lambda_1,\lambda_2,\dots,\lambda_r\in\mathbb{C}^*$ and pairwise distinct $\beta_1,\beta_2,\dots,\beta_r\in\mathbb{C}$ such that 
	\begin{equation*}
		\tilde{f}(x,y)=\frac{1}{d+1}\sum_{k=1}^{r}\lambda_k(x+\beta_ky)^{d+1}. 
	\end{equation*}
	Now taking the partial derivative of $\tilde{f}(x,y)$ with respect to $x$, we obtain a Waring decomposition of $f(x,y)$ as follows: 
	\begin{equation}\label{3.6}
		f(x,y)=\frac{\partial}{\partial x}\tilde{f}(x,y)=\frac{1}{d+1}\sum_{k=1}^{r}\lambda_k(d+1)(x+\beta_ky)^d=\sum_{k=1}^{r}\lambda_k(x+\beta_ky)^d.
	\end{equation}
	For any $s<r$, we have $s+r\leqslant 2r-1=d+1$. Proceeding as in the proof of Proposition \ref{Pro 3.5}, we can show that $f$ does not admit a Waring decomposition of length $s$. Consequently, the Waring rank of $f$ is precisely $r$. 
	
	Finally, we demonstrate that a generic binary form $f(x,y)$ of degree $d$ and Waring rank $r$ admits infinitely many Waring decompositions of the form given in \eqref{3.6}. 
	
	Still let $f(x,y)=\sum_{i=0}^{d}\binom{d}{i}a_ix^{d-i}y^i$ and let $\tilde{f}(x,y)$ be as stated above. We denote by $W$ the set of points for which $D(a_0,a_1,\dots,a_{d-2})B_t(a_0,a_1,\dots,a_d)\ne 0$. It follows that $W$ is a dense open subset of $\mathbb{A}^{d+1}$. Take $(a_0,a_1,\dots,a_d)\in W$. Then, for $\tilde{f}(x,y)$, we have 
	\begin{align*}
		T_d(\lambda)&=\operatorname{det}\,(\lambda A_0-A_1)\\
		&=
		\begin{vmatrix}
			\lambda a_0-a_1&\lambda a_1-a_2&\cdots&\lambda a_{r-1}-a_r\\
			\lambda a_1-a_2&\lambda a_2-a_3&\cdots&\lambda a_r-a_{r+1}\\
			\vdots&\vdots&\ddots&\vdots\\
			\lambda a_{r-1}-a_r&\lambda a_r-a_{r+1}&\cdots&\lambda a_d-a_{d+1}
		\end{vmatrix}\\
		&=-a_{d+1}\begin{vmatrix}
			\lambda a_0-a_1&\lambda a_1-a_2&\cdots&\lambda a_{r-2}-a_{r-1}\\
			\lambda a_1-a_2&\lambda a_2-a_3&\cdots&\lambda a_{r-1}-a_r\\
			\vdots&\vdots&\ddots&\vdots\\
			\lambda a_{r-2}-a_{r-1}&\lambda a_{r-1}-a_r&\cdots&\lambda a_{d-2}-a_{d-1}
		\end{vmatrix}+S(\lambda)\\
		&=-a_{d+1}T_{d-2}(\lambda)+S(\lambda),
	\end{align*}
	where $S(\lambda)$ is the polynomial comprising the terms in the determinant expansion that do not involve $a_{d+1}$. Given that $D(a_0,a_1,\dots,a_{d-2})\ne 0$, the polynomial $T_{d-2}(\lambda)$ is nonzero. Hence, for distinct values of $a_{d+1}$, the corresponding $T_d(\lambda)$'s are pairwise distinct. On the other hand, there exist infinitely many distinct $a_{d+1}$ such that $DR\ne 0$, and the corresponding $T_d(\lambda)$'s have no repeated roots. Consequently, these $T_d(\lambda)$'s have distinct sets of zeros. According to Theorem \ref{Th 3.1}, the $\tilde{f}(x,y)$'s corresponding to different $a_{d+1}$ have different Waring decompositions. Then, it follows from \eqref{3.6} that $f(x,y)$ admits infinitely many minimal decompositions. 
\end{proof}

Finally, we present an algorithm for computing the Waring rank and a minimal decomposition of any binary form, as detailed in Remark \ref{Rmk 3.4}. To elucidate the underlying idea of the algorithm, we begin with a concrete example. 


\begin{example} 
	Consider the binary form $f(x,y)=8x^3+12x^2y+6xy^2$. We first determine the $F$-ranks of $f$ and $f_x$, and then determine the Waring rank of $f$ in accordance with Proposition \ref{Pro 3.3}. Thereafter, we perform a minimal decomposition based on the specific context. 
	
	Initially, determine the $F$-rank of $f$ by the method outlined in Theorem \ref{Th 3.1}. For the given $f$, we have $a_0=8,a_1=4,a_2=2$ and $a_3=0$. First, solve for $c_0,c_1,\dots,c_{r-1}$ such that $a_i=-(c_0a_{i-r}+c_1a_{i-r+1}+\cdots+c_{r-1}a_{i-1})$ for $i=r,r+1,\dots,d$. 
	
	For $r=1$, the system of equations
	\begin{equation*}
		\begin{pmatrix}
			8\\
			4\\
			2
		\end{pmatrix}
		\begin{pmatrix}
			c_0
		\end{pmatrix}=
		\begin{pmatrix}
			-4\\
			-2\\
			0
		\end{pmatrix}
	\end{equation*}
	has no solution. 
	
	For $r=2$, the system of equations
	\begin{equation*}
		\begin{pmatrix}
			8&4\\
			4&2
		\end{pmatrix}
		\begin{pmatrix}
			c_0\\
			c_1
		\end{pmatrix}=
		\begin{pmatrix}
			-2\\
			0
		\end{pmatrix}
	\end{equation*}
	also has no solution. 
	
	For $r=3$, solving the equation 
	\begin{equation*}
		\begin{pmatrix}
			8&4&2
		\end{pmatrix}
		\begin{pmatrix}
			c_0\\
			c_1\\
			c_2
		\end{pmatrix}=
		\begin{pmatrix}
			0
		\end{pmatrix},
	\end{equation*}
	we obtain a parametric solution $(c_0,c_1,c_2)=(a,b,-4a-2b)$, where $a,b\in\mathbb{C}$. 
	
	Let $a_4=-(4a+2b+0)=-4a-2b$. Then
	\begin{equation*}
		A_0=\begin{pmatrix}
			8&4&2\\
			4&2&0\\
			2&0&-4a-2b
		\end{pmatrix}. 
	\end{equation*}
	At this point, $\operatorname{det}\,(A_0)=-8\ne0$, and $T(\lambda)=\lambda^3-(4a+2b)\lambda^2+b\lambda+a$. In fact, it suffices to take $a=0$ and $b=-\frac{1}{8}$. Then $T(\lambda)$ has no repeated roots, and thus $\operatorname{FR}\,(f)=3$. 
	
	The next thing to do is to determine the $F$-rank of $f_x$. Notice that $f_x=24x^2+24xy+6y^2=6(2x+y)^2$. Hence, $\operatorname{FR}\,(f_x)=1<\operatorname{FR}\,(f)$. 
	
	Using Proposition \ref{Pro 3.3}, we can now derive that $\operatorname{WR}\,(f)=\operatorname{FR}\,(f_x)+1=2<\operatorname{FR}\,(f)$. Therefore, each minimal decomposition of $f$ must include the term $y^3$. By integrating $f_x$ with respect to $x$, we obtain a minimal decomposition of $f$: 
	\begin{equation*}
		f(x,y)=(2x+y)^3-y^3. 
	\end{equation*}
	Moreover, by Proposition \ref{Pro 3.5}, the above decomposition is the unique minimal decomposition of $f$.
\end{example}

The following presents an algorithm for determining the Waring rank of any given binary form and its minimal decompositions. 

\begin{algorithm}\label{Alg 3.9}
	Given $f(x,y)=\sum_{i=0}^d\binom{d}{i}a_ix^{d-i}y^i\in\mathcal{R}_d$. 
	
	\begin{enumerate}
		\item[] \textit{Step 1: Solve the system of linear equations.} For $r=1$, solve the following system of equations: 
		\begin{equation}\label{3.7}
			\begin{pmatrix}
				a_0&a_1&\cdots&a_{r-1}\\
				a_1&a_2&\cdots&a_r\\
				\vdots&\vdots&\ddots&\vdots\\
				a_{d-r}&a_{d-r+1}&\cdots&a_{d-1}
			\end{pmatrix}
			\begin{pmatrix}
				x_0\\
				x_1\\
				\vdots\\
				x_{r-1}
			\end{pmatrix}=
			\begin{pmatrix}
				-a_r\\
				-a_{r+1}\\
				\vdots\\
				-a_d
			\end{pmatrix}. 
		\end{equation}
		If this system has a solution, select a set of parametric solutions $(c_0,c_1,\dots,c_{r-1})$ and continue; otherwise, repeat Step 1 for $r+1$. 
		
		\item[] \textit{Step 2: Compute the resultant to determine the $F$-rank of $f$.} Let $T(\lambda)=\lambda^r+c_{r-1}\lambda^{r-1}+\cdots+c_0$ and $T'(\lambda)=\frac{\operatorname{d}}{\operatorname{d}\lambda}T(\lambda)$. Compute the resultant of $T$ and $T'$ with respect to $\lambda$, denoted by $\operatorname{Res}\,(T,T',\lambda)$, which is a polynomial in $c_0,c_1,\dots,c_{r-1}$. If there exist $c_0,c_1,\dots,c_{r-1}\in\mathbb{C}$ such that $\operatorname{Res}\,(T,T',\lambda)\ne 0$, then we have $\operatorname{FR}\,(f)=r$ and continue; otherwise, return to Step 1 for $r+1$. 
		
		
		
		\item[] \textit{Step 3: Determine the $F$-rank of $f_x$.} Apply the above steps to $f_x$ to obtain $\operatorname{FR}\,(f_x)=s$. 
		
		\item[] \textit{Step 4: Determine the Waring rank of $f$.} If $s=r$, then $\operatorname{WR}\,(f)=r$. If $s<r$, then $\operatorname{WR}\,(f)=s+1$. 
		
		\item[] \textit{Step 5: Construct a minimal decomposition of $f$.} If $\operatorname{WR}\,(f)=\operatorname{FR}\,(f)$, then take an appropriate set $(c_0,c_1,\dots,c_{r-1})$ from Step 2, and obtain the polynomial $T(\lambda)=\lambda^r+c_{r-1}\lambda^{r-1}+\cdots+c_0$, whose distinct roots are $\beta_1,\beta_2,\dots,\beta_r$. Then, a minimal decomposition of $f$ is given by
		\begin{equation*}
			f(x,y)=\sum_{k=1}^{r}\lambda_k(x+\beta_ky)^d.
		\end{equation*}
		The coefficients $\lambda_1,\lambda_2,\dots,\lambda_r$ can be determined using the method of undetermined coefficients. 
		
		If $\operatorname{WR}\,(f)=\operatorname{FR}\,(f_x)+1$, then first construct a minimal $F$-decomposition $f_x(x,y)=\sum_{k=1}^s\mu_k(x+\gamma_ky)^{d-1}$. Then, integrate $f_x$ with respect to $x$ to obtain a minimal decomposition of $f$: 
		\begin{equation*}
			f(x,y)=\frac{1}{d}\sum_{k=1}^s\mu_k(x+\gamma_ky)^d+\mu_{s+1}y^d, 
		\end{equation*}
		where $\mu_{s+1}=a_d-\frac{1}{d}\sum_{k=1}^s\mu_k\gamma_k^d$.
	\end{enumerate}
\end{algorithm}

\begin{remark}
	Unlike Theorem \ref{Th 3.1}, we do not need to determine whether $\operatorname{det}\,(A_0)\ne 0$ in the step of the above algorithm for determining the $F$-rank of $f$. In fact, equation \eqref{3.1} in Theorem \ref{Th 3.1} is equivalent to \[(\partial_y^r+c_{r-1}\partial_y^{r-1}\partial_x+\cdots+c_0\partial_x^r)\circ f=0. \]
	If $T(\lambda)$ has pairwise distinct roots $\beta_1,\beta_2,\dots,\beta_r$, then \[\partial_y^r+c_{r-1}\partial_y^{r-1}\partial_x+\cdots+c_0\partial_x^r=\prod_{j=1}^r(\partial_y-\beta_j\partial_x), \]
	hence by the Apolarity Lemma (Theorem \ref{Th 2.4}), $f$ can be expressed in the following form:
	\begin{equation*}
		f(x,y)=\sum_{k=1}^{r}\lambda_k(x+\beta_ky)^d. 
	\end{equation*}
	Therefore, for a given $f\in\mathcal{R}_d$, in the process of determining the $F$-rank of $f$ according to Algorithm \ref{Alg 3.9}, if $r<\operatorname{FR}\,(f)$, then either the system of equations \eqref{3.7} has no solution, or $T(\lambda)$ has repeated roots. In either case, we need to return to Step 1 with $r+1$. 
	Particularly, if $r=\operatorname{FR}\,(f)$, then $\operatorname{det}\,(A_0)\ne 0$ holds automatically according to Theorem \ref{Th 3.1}. 
\end{remark}

To conclude, we discuss a special example. 

\begin{example}
	Consider the form $f(x,y)=3x^2y$, where $a_1=1$ and $a_0=a_2=a_3=0$. First, we determine the $F$-rank of $f$. According to Step 1 of the algorithm, solve the system of linear equations.  
	
	For $r=1$, the system of equations 
	\begin{equation*}
		\begin{pmatrix}
			0\\
			1\\
			0
		\end{pmatrix}
		\begin{pmatrix}
			c_0
		\end{pmatrix}=
		\begin{pmatrix}
			-1\\
			0\\
			0
		\end{pmatrix}
	\end{equation*}
	has no solution. 
	
	Let $r=2$. Consider the following equations: 
	\begin{equation*}
		\begin{pmatrix}
			0&1\\
			1&0
		\end{pmatrix}
		\begin{pmatrix}
			c_0\\
			c_1
		\end{pmatrix}=
		\begin{pmatrix}
			0\\
			0
		\end{pmatrix}. 
	\end{equation*}
	We find that the unique solution is $c_0=c_1=0$. However, in this case, the characteristic polynomial $T(\lambda)=\lambda^2$ has a repeated root. 
	
	Consider $r=3$. Solve the equation 
	\begin{equation*}
		\begin{pmatrix}
			0&1&0
		\end{pmatrix}
		\begin{pmatrix}
			c_0\\
			c_1\\
			c_2
		\end{pmatrix}=
		\begin{pmatrix}
			0
		\end{pmatrix}, 
	\end{equation*}
	and we obtain a parametric solution $(c_0,c_1,c_2)=(a,0,b)$, where $a,b\in\mathbb{C}$. 
	
	At this point, we have $T(\lambda)=\lambda^3+b\lambda^2+a$. By setting $a=-1$ and $b=0$, we find that $T(\lambda)$ has no repeated root. Therefore, $\operatorname{FR}\,(f)=3$. 
	
	Next, we determine the $F$-rank of $f_x(x,y)=6xy$. It is clear that $f_x$ is not a perfect square. Moreover, we have
	\begin{equation}\label{3.8}
		f_x(x,y)=\frac{3}{2}(x+y)^2-\frac{3}{2}(x-y)^2,
	\end{equation}
	so $\operatorname{FR}\,(f_x)=2<\operatorname{FR}\,(f)$. According to Step 4, we obtain $\operatorname{WR}\,(f)=2+1=3$. 
	
	Finally, we construct a minimal decomposition of $f$. Since $\operatorname{WR}\,(f)=\operatorname{FR}\,(f)$, by Step 5, a minimal $F$-decomposition of $f$ coincides with a minimal decomposition. We find that the roots of $T(\lambda)=\lambda^3-1$ are $1,\omega,\omega^2$, where $\omega=e^{\frac{2\pi\operatorname{i}}{3}}$. Hence, along with the method of undetermined coefficients we obtain a minimal decomposition of $f$: 
	\begin{equation*}
		f(x,y)=\frac{1}{3}(x+y)^3+\frac{\omega^2}{3}(x+\omega y)^3+\frac{\omega}{3}(x+\omega^2 y)^3. 
	\end{equation*}
	
	On the other hand, note that $\operatorname{WR}\,(f)=\operatorname{FR}\,(f_x)+1$. Likewise, according to Step 5, we first present a minimal $F$-decomposition of $f_x$ as shown in \eqref{3.8}. Subsequently, integrating $f_x$ with respect to $x$ yields a minimal decomposition of $f$ that includes the term $y^3$:
	\begin{equation*}
		f(x,y)=\frac{1}{2}(x+y)^3-\frac{1}{2}(x-y)^3-y^3. 
	\end{equation*}
\end{example}

\section*{Use of AI tools declaration}
The authors declare they have not used Artificial Intelligence (AI) tools in the creation of this article.

\section*{Acknowledgements}
H.-L. Huang was supported by Key Program of Natural Science Foundation of Fujian Province (Grant no. 2024J02018) and National Natural Science Foundation of China (Grant no. 12371037). Y. Ye was supported by the National Key R\&D Program of China (No. 2024YFA1013802), the National Natural Science Foundation of China (Nos. 12131015 and 12371042), and the Quantum Science and Technology-National Science and Technology Major Project (No. 2021ZD0302902).

\section*{Conflict of interest}
The authors declare no conflicts of interest.

\begin{spacing}{.88}
	\setlength{\bibsep}{2.pt}

\end{spacing}
\end{document}